\newtheorem{thm}{Theorem}[section]  
\newtheorem{lem}[thm]{Lemma}  
\newtheorem{cor}[thm]{Corollary}  
\newtheorem{defn}[thm]{Definition}  
\newtheorem{rem}[thm]{Remark}  
\newtheorem{conj}[thm]{Conjecture}
\newcommand{\XX}{\square^3}
\newcommand{\Aut}{\mathrm{Aut}}
\newcommand{\ord}{\mathrm{ord}}
\newcommand{\Gen}{\mathrm{Gen}}
\newcommand{\Mass}{\mathrm{Mass}}
\newcommand{\Z}{\mathbb{Z}}
\newcommand{\Q}{\mathbb{Q}}
\newcommand{\R}{\mathbb{R}}
\newcommand{\N}{\mathbb{N}}
\newcommand{\x}{\vec{x}}
\newcommand{\ve}{\varepsilon}
\newcommand{\al}{\alpha}
\newcommand{\leg}[2]{\left(\frac{#1}{#2}\right)}      
\renewcommand{\(}{\left(}
\renewcommand{\)}{\right)}
\renewcommand{\[}{\left[}
\renewcommand{\]}{\right]}
\title{A proof of the $S$-genus identities for ternary quadratic forms}
\author{Alexander Berkovich}
\address{Department of Mathematics \\ 358 Little Hall, PO Box 110105 \\ University of Florida \\ Gainesville, FL 32611-8105}
\email{alexb@ufl.edu}
\author{Jonathan Hanke}
\address{Department of Mathematics \\ University of Georgia \\ Athens, GA 30602}
\email{jonhanke@math.uga.edu}
\author{Will Jagy}
\address{Math. Sci. Res. Inst. \\ 17 Gauss Way \\ Berkeley, CA 94720-5070}
\email{jagy@msri.org}
\keywords{Ternary quadratic forms, $S$-genus,  $\theta$-functions, local densities, Siegel's product}
\subjclass[2000]{11E12, 11E20, 11E25 ;11F27, 11F30, 11F37}
\begin{document}
\maketitle

\begin{abstract}
In this paper we prove the main conjectures of Berkovich and Jagy about weighted averages of representation numbers over an $S$-genus of ternary lattices (defined below) for any odd squarefree $S \in \N$.  We do this by reformulating them in terms of local quantities using the Siegel-Weil and Conway-Sloane formulas, and then proving the necessary local identities.  We conclude by conjecturing generalized formulas valid over certain totally real number fields as a direction for future work.
\end{abstract}

\section{Introduction}
In \cite[\S6]{BJ} Berkovich and Jagy propose that the following identity (and also a twisted generalization of it) holds for all odd squarefree natural numbers $S$:
$$
r_{x^2+y^2+z^2}(m) = 48 \!\!\!\!\!\sum_{Q\in \text{$S$-genus}} \frac{r_{Q}(m)}{|\Aut(Q)|}
\qquad\text{for all $m\in\N$ with $m\equiv 1$ or 2$\!\!\!\!\pmod 4$,}
$$
where the {\bf $S$-genus} is defined the set of classes of ternary quadratic forms $Q$ locally equivalent to some quadratic form $B(x,y) + 2Sz^2$ where $B(x,y)$ is a positive definite integer-valued binary quadratic form of discriminant $-8S$.  
This formula is interesting because it gives a precise relationship between an average across certain naturally defined genera of ternary quadratic forms and (the genus of) $x^2 + y^2 + z^2$.  {\it It is not obvious that such an identity should exist, and that it does depends on some intricate relationships among local densities of genera within an $S$-genus.}

In this paper, we prove the formulas \cite[eq (6.3), (6.8) and (6.9)]{BJ} by applying Siegel's product and mass formulas to each genus appearing in the $S$-genus defined above.  We then state a natural generalization of these formulas for totally real number fields $F$ of class number one where $p=2$ is inert.


This paper is an outgrowth of several discussions between Spring 2008 and Fall 2009 about the $S$-genus identities and their relation to the Siegel-Weil formula at the Quadratic Forms and Higher Degree Forms Conferences hosted by Prof. Alladi at the University of Florida (Spring 2008, Spring 2009) and the AMS Southeast Sectional Meeting at Florida Altantic University in Boca Raton, FL (Fall 2009).  This work is also partially supported by the NSA/NSF Grants  and H98230-09-1-0051 and  DMS-0603976 of the first and second authors respectively.

\section{Notation}
Throughout this paper we take $S$ to be an odd squarefree natural number, written $S = p_1 \cdots p_r$ as a product of $r$ distinct primes $p_i\in\N$, and define the $S$-genus as in the Introduction.  We let $p \in \N$  be a prime number, and define the $p$-adic numbers and $p$-adic integers by $\Q_p$ and $\Z_p$ respectively.  We let $v$ denote a place of $\Q$, which is either given as a prime number $p$ (representing the usual $p$-adic absolute value $|x|_p :=p^{-\ord_p(x)}$) or as the archimedean place $\infty$ (representing the real absolute value $|x|$).  At the place $v=\infty$ we adopt the convention that $\Q_\infty := \Z_\infty = \R$.  For an odd prime $p\in \N$, we let $\leg{a}{p}$ denote the usual Legendre symbol.

We define a {\bf quadratic form in $n$ variables over a ring $R$} to be a degree 2 homogeneous polynomial in (exactly) n variables with coefficients in $R$.
We say that two quadratic forms $Q_1$ and $Q_2$ are {\bf equivalent} over a ring $R$, and write $Q_1 \sim_R Q_2$, if there in an invertible change of variables $\x \mapsto A\x$ so that $Q_1(\x) = Q_2(A\x)$ as polynomials.

We define the {\bf classes} of a quadratic form $Q$ to be the equivalence classes of $Q$ for the relation $\sim_\Z$, and the {\bf genus} of $Q$ denoted by $\Gen(Q)$ as the equivalence classes under the simultaneous relations $\sim_{\Z_v}$ for all places $v$ of $\Q$.  We say that a quadratic form $Q$ over $\R$ is {\bf positive (resp. negative) definite} if $Q(\x) >0$ (resp. $<0$) for all $\x \neq \vec 0$ in $\R^n$.  We say that quadratic form over $R$ is {\bf unimodular in $R$} if its Gram determinant is a unit in $R$.

We define the {\bf local representation densities} $\beta_Q, v(m)$ where $v$ is  place of $\Q$ and $Q$ is a $\Z_v$-valued quadratic form (over $\Z_v$) as in \cite[eq (5.1) and (5.2), p368]{Ha}.
%
%
For notational convenience, we occasionally abbreviate the sum of three squares form as $\XX := x^2 + y^2 + z^2$.

\section{Local facts about genera in the $S$-genus}

We begin by establishing several facts about the genera $G$  of quadratic forms contained in a given $S$-genus.  

\begin{lem} \label{lemma:S_genus_local}
The genera $G$ in an $S$-genus are everywhere locally $\Z_p$-equivalent to a diagonal form at all primes $p$.  More explicitly, for $p\neq 2$ we have
$$
G \sim_{\Z_p} 
\begin{cases}
x^2 + y^2 + z^2 & \text{when $p\nmid S$,} \\
\al x^2 + \al p y^2 + pz^2 & \text{when $p\mid S$, for some $\al \in \Z_p^\times /(\Z_p^\times)^2$,}
\end{cases}
$$
and when $p\mid S$ the squareclass $\al$ is uniquely determined.
\end{lem}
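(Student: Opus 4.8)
The plan is to work prime-by-prime, using the definition of the $S$-genus: every $Q$ in it is $\Z_p$-equivalent (for each $p$) to some $B(x,y) + 2Sz^2$ with $B$ integer-valued positive definite binary of discriminant $-8S$. Since $p\neq 2$, the binary form $B$ can be diagonalized over $\Z_p$, say $B\sim_{\Z_p} ax^2 + by^2$ with $\det B = 4ab - (\text{cross term})^2$ normalized so that $-4\det(B) = -8S$ gives $\disc B = -8S$; the key input is that $\ord_p(\disc B) = \ord_p(8S) = \ord_p(S)$, which is $0$ if $p\nmid S$ and $1$ if $p\mid S$.

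First, for $p\nmid S$: here $\disc B$ is a $p$-adic unit, so $B$ is unimodular over $\Z_p$, hence so is $Q = B + 2Sz^2$ (as $2S$ is also a unit). A unimodular ternary $\Z_p$-form with $p$ odd is classified by its determinant squareclass in $\Z_p^\times/(\Z_p^\times)^2$. One then checks that $Q$ has the same determinant squareclass as $x^2+y^2+z^2$: the determinant of $B + 2Sz^2$ is (up to the usual factor) $2S\cdot\det B$, and since $\disc B = -8S$ one gets $\det Q$ in the squareclass of $-S^2 \equiv -1$, matching $\XX$ whose Gram determinant is likewise $\equiv 1$ up to the $2$-power and sign conventions — so $Q\sim_{\Z_p}\XX$. (I would set the sign/$2$-power bookkeeping straight once, carefully, since that is where errors hide.)

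Second, for $p\mid S$: write $S = pS'$ with $p\nmid S'$. Now $\ord_p(\disc B) = 1$, so over $\Z_p$ the binary form $B$ is a unit times a ``scaled unimodular'' piece: $B\sim_{\Z_p} \al x^2 + \al p y^2$ for a unique $\al\in\Z_p^\times/(\Z_p^\times)^2$ (this is the standard Jordan-splitting normal form for a binary $\Z_p$-lattice of scale sequence $(1,p)$ — one block of scale $1$ and one of scale $p$, each one-dimensional, with the ratio of their unit parts pinned down by $\disc B/(-p) \in \Z_p^\times/(\Z_p^\times)^2$, equivalently by the Hilbert symbol / Hasse invariant of $B$). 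Since $2S = 2pS'$ has $\ord_p = 1$, the term $2Sz^2$ contributes another scale-$p$ block whose unit part is $2S'$ times a square, so $Q\sim_{\Z_p}\al x^2 + \al p y^2 + 2S' p z^2$. Finally, rescaling the coordinate $z$ (multiplying by a $p$-adic unit whose square absorbs $2S'/\al$) brings this to $\al x^2 + \al p y^2 + \al p z^2$, i.e. $\al x^2 + \al p y^2 + p z^2$ after the cosmetic relabeling $\al p\sim p$ in the stated form — more precisely the stated normal form $\al x^2 + \al p y^2 + p z^2$ is obtained by choosing the representative of the scale-$p$ block appropriately. Uniqueness of $\al$ follows because the scale-$1$ Jordan block of $Q$ over $\Z_p$ is an invariant of $\Gen(Q)$ at $p$, and its determinant squareclass is exactly $\al$; equivalently $\al$ is determined by $\disc B \bmod (\Z_p^\times)^2$, which is fixed across the $S$-genus by hypothesis.

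The main obstacle I expect is bookkeeping rather than conceptual: keeping the conventions for ``discriminant $= -8S$'' of an integer-valued binary form consistent with Gram-matrix determinants and with the $p$-adic Jordan-splitting normal forms, and verifying that the unit rescalings of $z$ available over $\Z_p^\times$ suffice to normalize the coefficient $2S'$ down to the claimed shape without disturbing the scale-$1$ block. The one genuinely substantive point is the \emph{uniqueness} of $\al$: it requires invoking that over an odd $p$ the Jordan decomposition of a $\Z_p$-lattice is unique up to equivalence of the blocks (Conway--Sloane / O'Meara $92{:}2$), so the determinant squareclass of the unit-scale block is a bona fide $\Z_p$-invariant, and then observing this invariant is constant on the $S$-genus because $\disc B$ is.
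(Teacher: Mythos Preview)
Your overall approach---diagonalize $B$ over $\Z_p$, split $Q$ into Jordan blocks, and classify unimodular pieces by determinant squareclass---is exactly what the paper does. But your execution of the $p\mid S$ case contains a genuine error that the hand-wave at the end does not repair.

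You assert $B \sim_{\Z_p} \alpha x^2 + \alpha p y^2$. Writing $S = pS'$, the constraint $\disc B = -8S$ forces the product of the two diagonal coefficients to be $2S = 2S'p$, so if $B \sim \alpha x^2 + \gamma p y^2$ with $\alpha,\gamma\in\Z_p^\times$ then $\alpha\gamma = 2S'$. Your form has $\alpha\gamma = \alpha^2$, which would force $2S'$ to be a square mod $p$; this fails for generic $S$. The correct diagonalization is $B \sim_{\Z_p} \alpha x^2 + 2S'\alpha\, p\, y^2$ (equivalently the paper's $\alpha x^2 + 2S\alpha\, y^2$), so $Q \sim_{\Z_p} \alpha x^2 + 2S'\alpha\, p\, y^2 + 2S' p\, z^2$.

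From here your ``rescale $z$ by a unit whose square absorbs $2S'/\alpha$'' cannot work: that would again require $2S'/\alpha$ to be a square. What is needed---and what your closing phrase ``choosing the representative of the scale-$p$ block appropriately'' gestures at without saying---is a genuinely two-dimensional equivalence: the scale-$p$ block is $p\cdot(2S'\alpha\, y^2 + 2S' z^2)$, a $p$-scaling of a binary unimodular form of determinant squareclass $(2S')^2\alpha \equiv \alpha$, hence $\sim_{\Z_p} p\cdot(\alpha y^2 + z^2)$ by the classification of unimodular lattices over $\Z_p$ for odd $p$. Diagonal coordinate rescaling alone does not realize this equivalence.

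One further slip: in your uniqueness paragraph, the clause ``equivalently $\alpha$ is determined by $\disc B \bmod (\Z_p^\times)^2$, which is fixed across the $S$-genus'' is wrong---$\disc B = -8S$ is the same for every genus in the $S$-genus, yet $\alpha$ varies (that variation is precisely what the $\varepsilon_p$ later record). Your first sentence there is the correct argument: $\alpha$ is the determinant squareclass of the rank-one scale-$1$ Jordan component, which is a $\Z_p$-invariant by uniqueness of the Jordan splitting. Drop the ``equivalently.''
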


\begin{proof}

Since binary forms over $\Z_2$ are completely classified  \cite[Lemma 4.1, pp117-118]{Ca}  and the only primitive non-diagonal forms appearing are $xy$  and $x^2 + xy + y^2$ both of which have odd discriminant, it is not possible for a binary form of even discriminant $-8S$ to be inequivalent to a diagonal form over $\Z_2$.  At all primes $p\neq 2$ the local Jordan splitting theorem \cite[Theorem 3.1, pp115-116]{Ca} guarantees that any (binary) quadratic form can be diagonalized over $\Z_p$, so the condition $Q\in \text{$S$-genus}$ above implies that for every prime $p$ we have 
$$
Q \sim_{\Z_p} \al x^2 + 2S\al y^2 + 2Sz^2
$$ 
for some $\al \in \Z_p^\times /(\Z_p^\times)^2$.

For $p\nmid 2S$, $Q$ is unimodular and the fact that that unimodular diagonal lattices are determined by their determinant squareclass gives that 
\begin{align*}
Q 
&\sim_{\Z_p} 
\al x^2 + 2S\al y^2 + 2Sz^2 \\
&\sim_{\Z_p} 
x^2 + y^2 + 4S^2\al^2z^2 \\
&\sim_{\Z_p} 
x^2 + y^2 + z^2.
\end{align*}

For $p\mid S$, we can simplify the $p$-modular component slightly, giving
\begin{align*}
Q 
&\sim_{\Z_p} 
\al x^2 + 2S\al y^2 + 2Sz^2 \\
&\sim_{\Z_p} 
\al x^2 + \al p y^2 + p (2S/p)^2z^2 \\
&\sim_{\Z_p} 
\al x^2 + \al p y^2 + pz^2. 
\end{align*}
The uniqueness of the squareclass of $\al$ follows from the uniqueness of the one-dimensional unimodular Jordan component (when $e(1) = 0$ in the last line of  \cite[Theorem 3.1, pp115-116]{Ca}).
\end{proof}

We now explicitly describe the genera appearing in a given $S$-genus with an $r$-tuple of signs $\ve_p \in \pm1$.

\begin{defn}
Given a genus $G$ in an $S$-genus, using Lemma  \ref{lemma:S_genus_local} we define the numbers 
$$
\ve_p  := \ve_p(G) := \leg{-\al}{p}  \text{ for all odd primes $p\mid S$.}
$$
These are well-defined by the uniqueness of $\al$ in the last line of Lemma \ref{lemma:S_genus_local}.
\end{defn}

\begin{lem} \label{lemma:S_genus_binary}
The genera $G$ in a fixed $S$-genus
are in 1-to-1 correspondence with the genera of positive definite binary quadratic forms of discriminant $-8S$.  
In particular, there are exactly $2^r$ such binary genera and they are uniquely labelled by the tuples $(\ve_p)_{p\mid S} \in \{\pm1\}^r$.
\end{lem}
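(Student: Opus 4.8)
The plan is to realize the correspondence by the orthogonal‑sum map $\Phi\colon B\mapsto B(x,y)+2Sz^2$ and to route the bijection through the labelling $G\mapsto(\ve_p(G))_{p\mid S}$. Since $\perp$ is compatible with base change, $B_1\sim_{\Z_v}B_2$ forces $\Phi(B_1)\sim_{\Z_v}\Phi(B_2)$ at every place $v$, so $\Phi$ descends to a map $\bar\Phi$ from genera of positive definite integer‑valued binary forms of discriminant $-8S$ to genera inside the $S$‑genus; this $\bar\Phi$ is surjective because, by definition of the $S$‑genus, each $Q$ in it satisfies $Q\sim_{\Z_v}B(x,y)+2Sz^2$ for all $v$ for a single binary form $B$, whence $\Gen(Q)=\Gen(\Phi(B))$. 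I will show that $\bar\Phi$ followed by $G\mapsto(\ve_p(G))_{p\mid S}$ is a bijection onto $\{\pm1\}^r$. Granting this, $\bar\Phi$ is injective, hence bijective (being already surjective), and the $\ve$‑labelling restricted to the genera of the $S$‑genus is then also a bijection onto $\{\pm1\}^r$; that is exactly the content of the Lemma, including that both counts equal $2^r$.

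To evaluate the composite, fix $B$ and compute the $\ve_p$'s of $\Phi(B)$. For $p\nmid 2S$ one has $\Phi(B)\sim_{\Z_p}\XX$ exactly as in the proof of Lemma~\ref{lemma:S_genus_local}, and the $\ve_p$'s involve no dyadic data, so only the primes $p\mid S$ matter. For such a $p$ the Gram determinant of $B$ is $2S$, so $p$ divides $\det B$ exactly once and the Jordan splitting theorem gives $B\sim_{\Z_p}\al x^2+p\beta y^2$, where $\al$ is the squareclass of a $p$‑adic unit represented by $B$ (uniquely pinned down as the one‑dimensional unimodular Jordan component) and $\beta$ is forced by the determinant. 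Splitting off $\langle 2S\rangle=\langle p\cdot(2S/p)\rangle$ and recombining the $p$‑modular pieces precisely as in Lemma~\ref{lemma:S_genus_local} yields $\Phi(B)\sim_{\Z_p}\al x^2+\al p\,y^2+pz^2$, hence $\ve_p(\Phi(B))=\leg{-\al}{p}=\leg{-1}{p}\leg{\al}{p}$. Since $\leg{\al}{p}$ is exactly the value at $p$ of the classical assigned genus character of $B$ (a Legendre symbol of a unit represented by $B$), the composite $G\mapsto(\ve_p(G))$ after $\bar\Phi$ is, up to the fixed coordinate‑wise twist by $\big(\leg{-1}{p}\big)_{p\mid S}$, the map sending a binary genus to its tuple of odd‑prime assigned characters.

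It remains to recall classical binary genus theory. Any integer‑valued binary form of discriminant $-8S$ is primitive: a content $d$ satisfies $d^2\mid 8S$, so $d\in\{1,2\}$, and $d=2$ would force the discriminant to be $\equiv 2\pmod 4$. Moreover $-8S=4(-2S)$ with $-2S$ squarefree and $\equiv 2\pmod 4$, so $-8S$ is a fundamental discriminant with exactly $r+1$ prime divisors $2,p_1,\dots,p_r$; hence there are exactly $2^{(r+1)-1}=2^r$ genera of positive definite binary forms of discriminant $-8S$, and sending a genus to its $r$ odd‑prime assigned characters is injective (the remaining dyadic character is determined by the product relation among the $r+1$ assigned characters), hence a bijection onto $\{\pm1\}^r$ by counting. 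Combined with the previous paragraph this shows $\bar\Phi$ composed with $G\mapsto(\ve_p(G))$ is a bijection onto $\{\pm1\}^r$, and the plan is complete.

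The step I expect to require the most care is the middle paragraph: one must pin down the $\Z_p$‑equivalence class of $B(x,y)+2Sz^2$ precisely enough to read off the squareclass $\al$, and then match $\al$ (as produced by Lemma~\ref{lemma:S_genus_local}) against the standard normalization of Gauss's assigned characters, getting the $\leg{-1}{p}$ normalization right. It is also worth confirming that ``locally equivalent to some $B(x,y)+2Sz^2$'' in the definition of the $S$‑genus is read so that $\bar\Phi$ is literally surjective (otherwise a short Hilbert‑reciprocity argument is needed to glue the varying local binary forms into a single global one). The point of routing the argument through the $\ve$‑labelling is that it needs no $2$‑adic classification of the ternary forms themselves; a direct proof that $\Phi$ is injective on $\Z_2$‑classes — via the Conway--Sloane / Cassels invariants, where the adjacent scales $0$ and $1$ form one compartment — would also work but is more cumbersome.
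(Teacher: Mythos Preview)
Your argument is correct and follows essentially the same route as the paper: surjectivity of $\bar\Phi$ from the definition, the count $2^r$ of binary genera of discriminant $-8S$ via the product relation among genus characters/Hasse invariants, and injectivity by recovering the squareclass $\alpha$ (hence the $\ve_p$'s) from the ternary genus. The only cosmetic difference is that you invoke the classical genus-character count for a fundamental discriminant as a black box, whereas the paper carries out the Hasse-invariant computation explicitly from Cassels' local classification; your extra remarks on primitivity and on the reading of ``locally equivalent to some $B+2Sz^2$'' are welcome sanity checks the paper leaves implicit.
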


\begin{proof}
It is clear from the definition of the $S$-genus that the map $B(x,y) \mapsto B(x,y) + 2Sz^2$ from binary genera of discriminant $-8S$ surjects onto genera in the $S$-genus.  


%
%

To see this map is injective, we first enumerate the local genera of positive definite primitive binary quadratic forms of discriminant $-8S$.  By \cite[\S14.3, Lemmas 3.1, 3.2, 3.3 parts (i-ii), (ii) and (ii$\gamma$)]{Ca} we see that there are exactly two such local genera at each of the places $v \in \{2, p_1, \dots, p_n\}$ and one local genus at all other places.  By \cite[\S14.5, p343, Cor]{Ca} we see that the global genera of this kind are in bijection with the choice of such a local genus $G_v$ at each place subject to the Hasse invariant constraint $\prod_v c_v(G_v) = 1$.  %
%
However if $p\mid 2S$ then the Hasse invariant  for a local diagonal form $Q = ux^2 + pvy^2$ with $u,v \in (\Z_p)^\times$ and $upv=2S$ is $c_p(Q) = (u, pv)_p = (u, pv)_p (u, -u)_p = (u, -2S)_p$.  So $c_p$ takes both $\pm1$ values, because when $p=2$ we know $-2S \not\equiv 1 \pmod 4$, and when $p\neq 2$ we know $-2S \not\in\Z_p^\times$.  Thus the Hasse condition defines an index two subgroup of size $2^r$, for which the local genus $G_2$ is uniquely specified by the choices of the genera at all primes $p_i\mid S$.

Finally, given an $S$-genus $\Gen(Q(x,y) + 2Sz^2)$ we can recover 
the values $\ve_{p_i}(Q) := \leg{-\al}{p_i}$ of 
the underlying genus of binary forms $\Gen(Q)$ because they both represent 
the same values $\al$ (mod $S$),
showing the desired injectivity.
\end{proof}


%


\begin{rem}
Our numbers $\ve_p$ are just another way of expressing the $r$ classical genus characters $\chi_i(m) := \leg{m}{p_i} = \leg{-1}{p_i}\ve_{p_i}$ where $m$ is an number represented by a form in the genus that is relatively prime to $2S$.  (See for example \cite[p222-224]{Cohn} or \cite[\S4.1]{Buell}.)
%
%
\end{rem}


\section{Computing the local representation densities}

In this section we compute the local representation densities $\beta_{x^2 + y^2 + z^2, p}(m)$ and $\beta_{G, p}(m)$ for a genus $G$ contained in an $S$-genus.

\begin{lem} \label{lemma:three_squares_odd_densities}
Suppose that $p$ is an odd prime.  Then
$$
\beta_{x^2 + y^2 + z^2, p}(m) 
= 
\begin{cases}
\(1-\frac{1}{p^2}\)(1 + \frac{1}{p} + \cdots + \frac{1}{p^{k-1}}) + \frac{1}{p^k}\(1 + \frac{\leg{-m/p^{2k}}{p}}{p}\) 
&\text{ if $\ord_p(m) = 2k$,}\\
\(1-\frac{1}{p^2}\)(1 + \frac{1}{p} + \cdots + \frac{1}{p^{k}}) &\text{ if $\ord_p(m) = 2k+1$,}\\
\end{cases}
$$
for some $k\in \Z\geq 0$.
\end{lem}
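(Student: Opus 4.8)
The plan is to compute $\beta_{x^2+y^2+z^2,p}(m)$ directly from its definition as a limit of scaled solution counts modulo $p^j$, using the standard device of reducing to the unimodular case at an odd prime. Recall that for an odd prime $p$ the form $x^2+y^2+z^2$ is unimodular over $\Z_p$, and the local density counts (a renormalized limit of) solutions of $x^2+y^2+z^2 \equiv m \pmod{p^j}$. I would first recall the classical evaluation of Gauss sums / quadratic solution counts: for a nondegenerate diagonal quadratic form of rank $n$ over $\Z/p^j\Z$ with $p$ odd, the number of representations of a unit $u$ is governed by the discriminant squareclass and the parity of $n$, and for $n=3$ (odd rank) the relevant character value is a Jacobi-type symbol $\leg{\cdot}{p}$ attached to $-(\text{disc})\cdot u$. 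The discriminant of $x^2+y^2+z^2$ is $1$, so this symbol is $\leg{-u}{p}$.

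The key structural step is the recursion in $\ord_p(m)$. Writing $m = p^e m'$ with $p\nmid m'$, I would split solutions $(x,y,z)$ of $x^2+y^2+z^2 = m$ (in the $p$-adic sense) according to whether all of $x,y,z$ are divisible by $p$ or not. If not all are divisible by $p$, the solution is "primitive modulo $p$" and its local contribution is computed by a single Gauss-sum evaluation giving the main term $\frac1{p^k}(1+\frac{\leg{-m/p^{2k}}{p}}{p})$ in the even case (and the boundary behavior $0$ from the character term in the odd case, since then $-m/p^{2k}$ is a non-unit). If all three are divisible by $p$, substituting $x=px_1$ etc. relates the count for $m$ at level $p^j$ to the count for $m/p^2$ at level $p^{j-2}$, which after tracking the normalization factors contributes the geometric-series "tail" $(1-\frac1{p^2})(1+\frac1p+\cdots)$ — each step of the recursion peels off one factor $(1-\frac1{p^2})\cdot\frac1{p^{i}}$ until $\ord_p$ drops below $2$, at which point either $\ord_p(m)=0$ (giving the length-$(k{+}1)$-term primitive contribution with the character) or $\ord_p(m)=1$ (giving no further character term, only the accumulated geometric sum of length $k{+}1$). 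Assembling these pieces yields exactly the two displayed cases.

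I expect the main obstacle to be purely bookkeeping: getting the normalization constants in the definition of $\beta_{Q,p}(m)$ (the powers of $p$ by which one divides the raw solution count $\#\{x \bmod p^j : Q(x)\equiv m \bmod p^j\}$, and how $j$ must be chosen relative to $\ord_p(m)$ so the limit stabilizes) exactly right so that the recursion closes cleanly and the final geometric series has the stated number of terms ($k$ vs.\ $k+1$, and the split of the $\ord_p(m)=2k$ case into a length-$k$ geometric part plus the two-term primitive part $\frac1{p^k}(1+\frac{\leg{-m/p^{2k}}{p}}{p})$). A secondary technical point is handling the case $k=0$ uniformly, where the geometric sum is empty and only the primitive term survives; and confirming that when $\ord_p(m)$ is odd the primitive-level Gauss sum for a non-unit argument genuinely vanishes (no character term), which is what forces the second case to have no $\leg{\cdot}{p}$ contribution. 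Given Lemma~\ref{lemma:S_genus_local}'s reduction to diagonal forms and the classical closed forms for local densities of diagonal ternary forms at odd primes (cf.\ \cite[eq (5.1), (5.2)]{Ha}), none of these steps require new ideas — the content is organizing the recursion and matching constants.
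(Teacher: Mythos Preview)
Your approach is correct and essentially identical to the paper's: your split into primitive vs.\ all-divisible-by-$p$ solutions is exactly the Good/Bad vs.\ Zero-type decomposition of \cite{Ha}, and your recursion is precisely the Zero-type reduction $\beta_Q(m) = \beta^{Good\cup Bad}_Q(m) + \tfrac{1}{p}\beta_Q(m/p^2)$ that the paper invokes. The only difference is that the paper quotes the primitive-level values $1+\tfrac{1}{p}\leg{-m}{p}$ (unit case) and $1-\tfrac{1}{p^2}$ (non-unit case) from \cite{Ha} and \cite{Siegel} rather than rederiving them via Gauss sums.
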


\begin{proof}
From \cite[eq (5.2), Remark 3.4.1(b), and Table 1]{Ha} we see 
for $p\neq 2$ that
$$
\beta_{x^2 + y^2 + z^2, p}^{Good \,\cup\, Bad}(m)
= \beta_{x^2 + y^2 + z^2, p}^{Good}(m)
= 
\begin{cases}
\frac{p^2 + p\leg{-m}{p}}{p^2} & \text{ if $p\nmid m$,}\\
\frac{p^2 - 1}{p^2} & \text{ if $p\mid m$.}
\end{cases}
$$
The lemma follows from this and the formula 
\begin{equation} \label{eq:Zero-type_recusion}
\begin{aligned}
\beta_Q(m) 
&= \beta^{Good \,\cup\, Bad}_Q(m) + \beta^{Zero}_Q(m) \\
&= \beta^{Good \,\cup\, Bad}_Q(m) + \tfrac{1}{p}\beta_Q(m/p^2)
\end{aligned}
\end{equation}
from \cite[Remark 3.4.1(d) about $\pi_Z$, p362]{Ha} for ternary forms.  This result is also mentioned in \cite[\S28.4 with $m=3$, p228]{Siegel}.
\end{proof}

\begin{lem} \label{lemma:S_genus_densities}
Suppose that $G$ is a genus in an $S$-genus and that $p\mid S$ is an odd prime.  
Then
$$
\beta_{G, p}(m) 
= 
\begin{cases}
\(1-\frac{1}{p}\)(1+ \ve_p)(1 + \frac{1}{p} + \cdots + \frac{1}{p^{k-1}}) + \frac{1}{p^k}\(1 + \ve_p\leg{-m/p^{2k}}{p}\) 
&\text{ if $\ord_p(m) = 2k$,}\\
\(1-\frac{1}{p}\)(1+ \ve_p)(1 + \frac{1}{p} + \cdots + \frac{1}{p^{k-1}}) + \frac{1}{p^k}\(1 - \frac{\ve_p}{p}\) 
&\text{ if $\ord_p(m) = 2k+1$,}\\
\end{cases}
$$
for some $k\in \Z\geq 0$.
\end{lem}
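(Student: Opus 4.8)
The plan is to compute $\beta_{G,p}(m)$ for $p \mid S$ by the same strategy used in Lemma \ref{lemma:three_squares_odd_densities}: first determine the ``stable'' density $\beta_{G,p}^{Good\,\cup\,Bad}(m)$ directly from the local shape of $G$ given by Lemma \ref{lemma:S_genus_local}, then feed it into the Zero-type recursion \eqref{eq:Zero-type_recusion}. By Lemma \ref{lemma:S_genus_local} we have $G \sim_{\Z_p} \al x^2 + \al p y^2 + pz^2$ for the uniquely determined squareclass $\al$ with $\leg{-\al}{p} = \ve_p$, so it suffices to compute the local density of this explicit diagonal form. First I would read off $\beta_{G,p}^{Good\,\cup\,Bad}(m)$ from the tables and formulas of \cite[eq (5.2), Remark 3.4.1, Table 1]{Ha} applied to the Jordan splitting of $G$ at $p$, which has a one-dimensional unimodular block $\al x^2$ and a two-dimensional $p$-modular block $p(\al y^2 + z^2)$. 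The key point is that the $p$-modular binary block has determinant squareclass $\al^2 \equiv 1$, i.e. it is (after scaling by $p$) the norm form of the unramified quadratic extension or a hyperbolic plane depending on $\leg{-1}{p}$ — either way its contribution is governed by the Legendre symbol $\leg{-m/p^j}{p}$ at the appropriate power.

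Second, I would split into cases according to $\ord_p(m)$. The unimodular part $\al x^2$ contributes a factor depending only on whether $m/\al$ is a square mod $p$ when $p \nmid m$, while the $p$-modular part contributes only when $p \mid m$. Carrying out the ``Good'' count for the diagonal form $\al x^2 + \al p y^2 + p z^2$ modulo successive powers of $p$ gives, for $p \nmid m$, a local density of the form $1 + \ve_p \leg{-m}{p}/\text{(something)}$ — here one must track that $\leg{-\al}{p} = \ve_p$ converts the raw $\leg{\al m}{p}$-type terms into $\ve_p$-weighted ones. For $p \mid m$ one gets instead a contribution from solutions with $x \equiv 0 \pmod p$, which after a change of variables reduces to counting on the binary $p$-modular block, yielding the $(1-\tfrac{\ve_p}{p})$ or $(1+\ve_p\leg{\cdot}{p})$ terms. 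Then applying \eqref{eq:Zero-type_recusion}, namely $\beta_{G,p}(m) = \beta_{G,p}^{Good\,\cup\,Bad}(m) + \tfrac1p \beta_{G,p}(m/p^2)$, and iterating $k$ times (until the argument has $p$-adic valuation $0$ or $1$) produces the geometric series $1 + \tfrac1p + \cdots + \tfrac{1}{p^{k-1}}$ with the common ratio $\tfrac1p$, multiplied by the stable value $(1-\tfrac1p)(1+\ve_p)$, plus the leftover base-case term $\tfrac{1}{p^k}$ times either $(1 + \ve_p\leg{-m/p^{2k}}{p})$ or $(1 - \tfrac{\ve_p}{p})$ according to the parity of $\ord_p(m)$.

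I expect the main obstacle to be the bookkeeping in the ``Good $\cup$ Bad'' density computation for the non-unimodular form $\al x^2 + \al p y^2 + p z^2$: unlike the sum of three squares, which is unimodular at odd $p$, here $p$ divides the determinant, so one cannot simply quote the unimodular Table 1 entry and must instead carefully handle the Jordan decomposition into scales $1$ and $p$, being attentive to the normalization of $\beta^{Bad}$ versus $\beta^{Good}$ in \cite{Ha} and to whether the ``Bad-type'' contribution vanishes. The identification $\ve_p = \leg{-\al}{p}$ and the elementary identity $(\al, p)_p$-type manipulations (as already used in the proof of Lemma \ref{lemma:S_genus_binary}) should let me express everything uniformly in $\ve_p$; the remaining work is then a finite, if slightly tedious, verification that the recursion unwinds to exactly the two displayed closed forms. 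A useful sanity check along the way is that setting $\ve_p = 1$ and replacing the local form by the unimodular one should degenerate to a structurally parallel formula to Lemma \ref{lemma:three_squares_odd_densities}, which gives confidence that the geometric-series pattern and the base cases are being tracked correctly.
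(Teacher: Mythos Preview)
Your plan is correct and matches the paper's proof essentially step for step: compute $\beta^{Good}$ from the unimodular block $\al x^2$, compute $\beta^{Bad\,I}$ via the Bad-type~I reduction map (which lands on $\beta^{Good}_{\al p x^2 + \al y^2 + z^2,\,p}(m/p)$), combine to get $\beta^{Good\,\cup\,Bad}$ in the three cases $p\nmid m$, $\ord_p(m)=1$, $\ord_p(m)\geq 2$, and then iterate the Zero-type recursion \eqref{eq:Zero-type_recusion}. The only refinement the paper makes explicit is that the highest power of $p$ in the coefficients is $1$, so there is no Bad-type~II contribution and the reduction terminates after one step; you anticipated exactly this bookkeeping issue.
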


\begin{proof}


For odd primes $p\mid S$ we can compute the representation densities $\beta_{Q, p}(m)$ explicitly via the formulas \cite[\S3]{Ha}, giving
\begin{align*}
\beta_{\al x^2+\al py^2+pz^2, p}(m) 
&= \beta^{Good}_{\al x^2+\al py^2+pz^2, p}(m) + \beta^{Bad I}_{\al x^2+\al py^2+pz^2, p}(m) 
\end{align*}
because the highest power of $p$ dividing a coefficient is 1.   From \cite[Table 1, p363]{Ha} we compute
\begin{equation*}
\beta^{Good}_{\al x^2+\al py^2+pz^2, p}(m) = 
\beta^{Good}_{\al x^2, p}(m) = 
\begin{cases}
1 + \ve_p \leg{-m}{p} & \text{ if } p\nmid m,\\
0 & \text{ if } p\mid m,\\
\end{cases}
\end{equation*}
and also \cite[p360, middle, Bad-type I reduction map]{Ha} gives
\begin{equation*}
\beta^{Bad I}_{\al x^2+\al py^2+pz^2, p}(m) = 
\begin{cases}
0 & \text{ if } p\nmid m,\\
\beta^{Good}_{\al px^2+\al y^2+ z^2, p}(m/p) & \text{ if } p\mid m,\\
\end{cases}
\end{equation*}
where
\begin{equation*}
\beta^{Good}_{\al px^2+\al y^2+ z^2, p}(m/p)
= \beta^{Good}_{\al y^2+ z^2, p}(m/p)
= \begin{cases}
1 - \frac{\ve_p}{p} & \text{ if } p\nmid m,\\
\(1 - \frac{1}{p}\)\(1 + \ve_p\) & \text{ if } p\mid m.\\
\end{cases}
\end{equation*}
These combine to give
\begin{equation*}
\beta^{Good \,\cup\, Bad}_{\al x^2+\al py^2+pz^2, p}(m) =
\begin{cases}
1 + \ve_p \leg{-m}{p} & \text{ if } p\nmid m,\\
1 - \frac{\ve_p}{p} & \text{ if } \ord_p(m) = 1,\\
\(1 - \frac{1}{p}\)\(1 + \ve_p \) & \text{ if } \ord_p(m) \geq 2.\\
\end{cases}
\end{equation*}
and the lemma follows by combining these with equation (\ref{eq:Zero-type_recusion}).
\end{proof}

\begin{lem} \label{lemma:three_squares_two_density}
When $p=2$, we have the following local representation densities:
\begin{equation}
\beta_{x^2+y^2+z^2, 2}(m) 
= \begin{cases}
\frac{3}{2} & \text{ if } m\equiv 1,2 \pmod 4, \\
1 & \text{ if } m\equiv 3 \pmod 8, \\
0 & \text{ if } m\equiv 7 \pmod 8, \\
\end{cases}
\end{equation}
and 
\begin{equation}
\beta_{G, 2}(m) 
= \beta_{ux^2+2uy^2+2z^2, 2}(m)
= \begin{cases}
1 & \text{ if } m\equiv 1,2 \pmod 4, \\
0 \text{ or } 2 & \text{ if $m\equiv 3 \pmod 4$ (depending on $u$)}. \\
\end{cases}
\end{equation}
\end{lem}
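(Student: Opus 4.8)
The plan is to compute the two $2$-adic representation densities directly from the local formulas of \cite[\S3]{Ha}, exactly as was done for odd primes in Lemmas \ref{lemma:three_squares_odd_densities} and \ref{lemma:S_genus_densities}, but now tracking the extra complications that arise at $p=2$. First I would recall from Lemma \ref{lemma:S_genus_local} that any genus $G$ in the $S$-genus is $\Z_2$-equivalent to a diagonal form $ux^2 + 2uy^2 + 2z^2$ with $u\in\Z_2^\times/(\Z_2^\times)^2$ (the binary part $B(x,y)$ of discriminant $-8S$ is $\Z_2$-equivalent to $ux^2 + 2uy^2$ for a suitable unit $u$, since $-8S\equiv 8 \cdot(\text{odd})$ forces this shape among the classified binary $\Z_2$-forms). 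So both densities are densities of diagonal forms, and the task is purely computational.

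The key steps, in order: (1) For $\XX = x^2+y^2+z^2$ over $\Z_2$, apply the Good/Bad/Zero decomposition \eqref{eq:Zero-type_recusion} (valid verbatim at $p=2$ with $\tfrac12$ in place of $\tfrac1p$), and evaluate $\beta^{Good\,\cup\,Bad}_{\XX,2}(m)$ from \cite[Table 1 and the Bad-type reduction maps]{Ha}. Since $x^2+y^2+z^2$ is unimodular over $\Z_2$, only the Good-type term contributes, and one gets $\beta^{Good\,\cup\,Bad}_{\XX,2}(m)$ depending only on $m\bmod 8$; feeding this into the Zero-type recursion (noting $m\equiv 1,2\pmod 4$ means $m$ is not $4$ times anything, so the recursion terminates immediately) yields the stated values $\tfrac32$, $1$, $0$ according to $m\equiv 1,2$; $3\pmod 8$; $7\pmod 8$. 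The value $m\equiv 3\pmod 8$ recovers the classical fact that every such $m$ is a sum of three squares, and $m\equiv 7\pmod 8$ is the classical obstruction. (2) For $G \sim_{\Z_2} ux^2+2uy^2+2z^2$, the form has Jordan components at scales $2^0$ (the piece $ux^2$) and $2^1$ (the piece $2uy^2+2z^2$), so in addition to Good-type terms there are Bad-type I contributions coming from the scale-$2^1$ block; I would evaluate each using \cite[\S3, Table 1, and the Bad-I reduction map]{Ha}, exactly paralleling the odd-$p$ computation in Lemma \ref{lemma:S_genus_densities}, and then close the recursion \eqref{eq:Zero-type_recusion}. For $m\equiv 1,2\pmod 4$ the recursion again terminates immediately and the Good/Bad pieces collapse to $1$. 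For $m\equiv 3\pmod 4$ one gets $0$ or $2$ according to whether $-m$ is or is not a square times $u$ in $\Z_2^\times$ (equivalently, according to the value of the relevant Hilbert symbol $(u,-m)_2$), which is the "depending on $u$" clause.

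The main obstacle is the careful bookkeeping of the $2$-adic Good-type and Bad-type local density formulas, which at $p=2$ involve more cases (parity of the exponent of $2$, congruences mod $8$, and the precise normalization of the $2$-adic Jordan splitting) than the clean odd-$p$ formulas used above; in particular one must be vigilant about the $(1\pm\cdot)$ factors and powers of $2$ in \cite[Table 1]{Ha} and about which of the classified $\Z_2$-forms the binary part $B(x,y)$ falls into. A secondary subtlety is verifying that for $m\equiv 1,2\pmod 4$ no Zero-type contributions survive (i.e. that $\tfrac12\beta_Q(m/4)$ never enters), and a final cross-check is to confirm that these $p=2$ outputs are consistent with the global Siegel–Weil identity that the paper ultimately needs — i.e. that the product of all local densities reproduces the conjectured factor $48$.
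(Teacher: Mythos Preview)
Your approach is correct but differs from the paper's. You propose to run the Good/Bad/Zero decomposition and the Bad-type~I reduction from \cite[\S3]{Ha} at $p=2$, carefully tracking the extra case distinctions that arise there, in direct parallel with Lemmas~\ref{lemma:three_squares_odd_densities} and~\ref{lemma:S_genus_densities}. The paper instead bypasses the $2$-adic density formulas entirely: since the statement only concerns $m$ with $\mathrm{ord}_2(m)\le 1$, it simply counts representations of each residue class modulo~$16$ by hand, citing \cite[Lemma~3.2 and the Bad-type~I reduction map]{Ha} to justify that a mod~$16$ count suffices. Your route is more systematic and keeps the argument uniform with the odd-prime lemmas, at the cost of exactly the bookkeeping you flag as the main obstacle (the $2$-adic Table~1 entries, the mod~$8$ splits, and the normalization of the Jordan blocks). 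The paper's route is more elementary and shorter---a finite check---and sidesteps that bookkeeping altogether, but gives less structural insight into why the answers come out as they do.
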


\begin{proof}
At $p=2$ we compute the two relevant local densities for $m$ with $\ord_2(m) \leq 1$ by counting all representation numbers modulo $16$. (This is sufficient by \cite[Lemma 3.2 and the Bad-type I reduction map on p360 middle]{Ha}).
\end{proof}

\begin{lem} \label{lemma:archimedean_density}
The local representation density at $v=\infty$ has the form
$$
\beta_{Q, \infty}(m) = C_n \det(Q)^{-\frac12} m^\frac{n-2}{2}
$$
where $C_n$ is some constant depending only on $n := \dim(Q)$.
\end{lem}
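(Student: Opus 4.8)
The plan is to compute the archimedean local density directly from its definition as a volume. Recall that $\beta_{Q,\infty}(m)$ is defined (see \cite[eq (5.1)--(5.2), p368]{Ha}) as the limit
$$
\beta_{Q,\infty}(m) = \lim_{\varepsilon\to 0^+} \frac{\vol\{\x\in\R^n : |Q(\x)-m| < \varepsilon\}}{2\varepsilon},
$$
i.e. as the surface measure (Leray measure) of the level set $\{Q(\x)=m\}$ with respect to the gradient of $Q$. Since $Q$ is positive definite (all forms in our $S$-genus are), this level set is a compact ellipsoid and the limit exists. The strategy is simply to perform the linear change of variables that diagonalizes $Q$ over $\R$ and rescales it to the unit sphere, and then track how the volume and the $\frac{1}{2\varepsilon}$ normalization transform.

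First I would diagonalize: since $Q$ is positive definite over $\R$, there is $A\in GL_n(\R)$ with $Q(A\y) = |\y|^2 = y_1^2+\cdots+y_n^2$, and $\det(A)^2 = \det(Q)^{-1}$ (with a suitable normalization of $\det(Q)$ as the Gram determinant). Under $\x = A\y$ the region $\{|Q(\x)-m|<\varepsilon\}$ becomes $\{|\,|\y|^2 - m\,| < \varepsilon\}$ and its volume is multiplied by $|\det A| = \det(Q)^{-1/2}$. Next I would compute the homogeneous quantity $N(m) := \lim_{\varepsilon\to0^+}\frac{1}{2\varepsilon}\vol\{\y\in\R^n : |\,|\y|^2-m| < \varepsilon\}$. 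Writing this in polar coordinates, $\vol\{m-\varepsilon < |\y|^2 < m+\varepsilon\} = \omega_{n-1}\int_{\sqrt{m-\varepsilon}}^{\sqrt{m+\varepsilon}} \rho^{n-1}\,d\rho$ where $\omega_{n-1} = \vol(S^{n-1})$; dividing by $2\varepsilon$ and letting $\varepsilon\to 0$ gives $N(m) = \frac{\omega_{n-1}}{2} m^{(n-1)/2} \cdot \frac{1}{\sqrt m} = \frac{\omega_{n-1}}{2} m^{(n-2)/2}$. Setting $C_n := \frac{\omega_{n-1}}{2} = \frac{\pi^{n/2}}{\Gamma(n/2)}$ — a constant depending only on $n$ — yields $\beta_{Q,\infty}(m) = C_n \det(Q)^{-1/2} m^{(n-2)/2}$, as claimed.

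The only real point requiring care — and the step I would flag as the main obstacle — is bookkeeping the normalization conventions: matching the precise definition of $\beta_{Q,\infty}(m)$ used in \cite{Ha} (the power of $2$, whether $\varepsilon$ or $2\varepsilon$ appears, and whether the Gram matrix or half the Gram matrix is used in $\det(Q)$) so that the constant $C_n$ comes out clean and the scaling exponent on $\det(Q)$ is exactly $-\frac12$ independent of $m$. Once the convention is pinned down, the change-of-variables and polar-coordinate computation above is routine, and in fact the precise value of $C_n$ is irrelevant for our application: all that matters is that it depends only on $n=\dim(Q)=3$, so it cancels when we take ratios of densities for the two ternary forms $\XX$ and $G$.
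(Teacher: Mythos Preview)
Your computation is correct and is precisely the standard derivation of the archimedean density for a positive definite form: diagonalize over $\R$, pick up the Jacobian $\det(Q)^{-1/2}$, and evaluate the spherical shell in polar coordinates to get $C_n = \tfrac{1}{2}\vol(S^{n-1}) = \pi^{n/2}/\Gamma(n/2)$. The paper, by contrast, gives no argument at all here: its ``proof'' is a bare citation to Siegel \cite[(6.40)]{Siegel} and to \cite[eq (5.5)]{Ha}, where exactly this computation is carried out. So your route is not different in spirit---it \emph{is} the argument those references contain---but you have supplied the details the paper omits. Your caveat about normalization conventions is well placed; since the application only uses that $\beta_{Q,\infty}(m)/\beta_{\XX,\infty}(m) = \det(Q)^{-1/2}$ for two ternary forms, the exact value of $C_n$ is indeed immaterial and any consistent convention suffices.
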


\begin{proof}
This is proved in \cite[(6.40) with $m=3$ and $n=1$, p42]{Siegel} and stated more recently in \cite[eq (5.5), p369]{Ha}.
\end{proof}

\section{Computing masses of genera in an $S$-genus}

In this section we prove the mass conjecture for $S$-genera formulated in  \cite[eq (6.3)]{BJ} by using an explicit general mass formula of Conway and Sloane \cite{CS}.

\begin{lem} \label{lemma:mass_formula}
  Suppose that $G$ is a genus in an $S$-genus.  Then
$$
\Mass(G) :=
\sum_{Q\in G} \frac{1}{|\Aut(Q)|} 
= \frac{1}{16}
\cdot \prod_{p\mid S} \frac{p+\ve_p}{2}.
$$
\end{lem}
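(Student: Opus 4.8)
The plan is to apply the explicit mass formula of Conway and Sloane \cite{CS}, which expresses $\Mass(G)$ as a universal constant $c_n$ depending only on $n = \dim G = 3$, times an infinite product $\prod_p m_p(G)$ of local ``$p$-masses'', each of which is an elementary function of the $p$-adic Jordan decomposition of $G$ (the dimensions and determinant squareclasses of its Jordan constituents, their scales, the type~I versus type~II distinction at $p=2$, and the cross-constituent factors). Lemma \ref{lemma:S_genus_local} supplies these Jordan decompositions outright: $G \sim_{\Z_p} x^2+y^2+z^2$ for $p\nmid 2S$; $G\sim_{\Z_p}\al x^2+\al p y^2 + pz^2$ for odd $p\mid S$ (two constituents, of scales $1$ and $p$ and dimensions $1$ and $2$); and a fixed diagonal form with constituents of scales $1$ (dimension $1$) and $2$ (dimension $2$) at $p=2$. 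So the whole computation reduces to elementary manipulation once the Conway--Sloane recipe is written down.

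The useful shortcut is to evaluate everything relative to $\XX := x^2+y^2+z^2$ rather than from scratch. Since $G\sim_{\Z_p}\XX$ for every $p\nmid 2S$, the local masses agree there and the universal constant $c_3$ cancels in the ratio, leaving
\begin{equation*}
\frac{\Mass(G)}{\Mass(\XX)} \;=\; \frac{m_2(G)}{m_2(\XX)}\cdot\prod_{p\mid S}\frac{m_p(G)}{m_p(\XX)} .
\end{equation*}
As $\XX$ has class number one with $|\Aut(\XX)| = 48$, we know $\Mass(\XX) = \tfrac1{48}$, so only the finitely many local ratios on the right remain. For each odd $p\mid S$ I would compare the two-constituent form $\al x^2 + \al py^2 + pz^2$ against the unimodular form $\langle 1,1,1\rangle$: the scale and cross-term contributions are explicit powers of $p$, and the quadratic-residue ingredient attached to the unimodular block $\langle\al\rangle$ is precisely what introduces the sign $\ve_p = \leg{-\al}{p}$; I expect this ratio to be $\tfrac{p+\ve_p}{2}$. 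For $p=2$ I would compare the fixed $2$-adic Jordan form of $G$ against $\langle 1,1,1\rangle$; I expect this ratio to be the constant $3$, which turns $\tfrac1{48}$ into $\tfrac1{16}$. Multiplying these gives $\Mass(G) = \tfrac1{16}\prod_{p\mid S}\tfrac{p+\ve_p}{2}$.

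The main obstacle is the bookkeeping inside the Conway--Sloane $2$-mass, which is by far the most intricate part of their formula: one must correctly track the oddity / type~I--type~II classification of each Jordan constituent, the extra factors of $2$ that Conway and Sloane attach to individual constituents, and the cross-constituent products, and get every power of $2$ exactly right so as to land on the constant $3$ (equivalently on the overall $\tfrac1{16}$). Two clean consistency checks are $S=1$, where the $S$-genus is the single genus of $x^2+2y^2+2z^2$ (class number one, $|\Aut| = 16$, so $\Mass = \tfrac1{16}$, matching the empty product), and $S=3$, where the two genera $\Gen(x^2+6y^2+6z^2)$ and $\Gen(2x^2+3y^2+6z^2)$ have $\ve_3 = -1$ and $\ve_3 = +1$, hence predicted masses $\tfrac1{16}$ and $\tfrac18$, in agreement with $|\Aut|$ equal to $16$ and $8$ for these one-class genera. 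Alternatively one can skip the comparison with $\XX$ entirely and substitute the diagonal forms of Lemma \ref{lemma:S_genus_local} directly into the Conway--Sloane formula; this trades the appeal to $\Mass(\XX)=\tfrac1{48}$ for a somewhat longer but completely mechanical calculation.
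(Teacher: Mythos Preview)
Your plan is sound and your expected local ratios are correct, but the paper takes the direct route you mention only at the end: it substitutes the Jordan data from Lemma~\ref{lemma:S_genus_local} straight into the Conway--Sloane formula, computing $2m_p(G)=(1-p^{-2})^{-1}$ for $p\nmid 2S$, $2m_p(G)=\dfrac{p}{2(1-\ve_p/p)}$ for odd $p\mid S$, and $2m_2(G)=\tfrac12$ via the type~I/II bookkeeping, and then closes the infinite product using $\prod_{p\neq 2}(1-p^{-2})^{-1}=\tfrac{\pi^2}{6}\cdot\tfrac34$. Your comparison-with-$\XX$ approach is a genuine shortcut: it absorbs the entire $\zeta(2)$/Gamma-factor computation into the single known value $\Mass(\XX)=\tfrac1{48}$, so only finitely many local ratios remain. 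Concretely, the paper's numbers give $m_p(G)/m_p(\XX)=\dfrac{p}{2(1-\ve_p/p)}\cdot(1-p^{-2})=\dfrac{p+\ve_p}{2}$ at odd $p\mid S$ and, since one can back out $2m_2(\XX)=\tfrac16$ from $\Mass(\XX)=\tfrac1{48}$, the $2$-adic ratio is indeed $\tfrac{1/2}{1/6}=3$, confirming both of your expectations. The tradeoff is that the paper's direct computation is self-contained (it does not import $\Mass(\XX)=\tfrac1{48}$ as a black box), whereas your route is shorter and more conceptual; in either case the delicate step is the same $2$-adic Conway--Sloane calculation, which you correctly flag as the crux but do not carry out. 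Your $S=1$ and $S=3$ sanity checks are accurate and would catch an error in that step.
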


\begin{proof}  From the the mass formula \cite[eq (2)]{CS} of Conway and Sloane, the mass of a (ternary) genus $G$ in the given $S$-genus is
\begin{align*}
\Mass(G)
&= 2 \pi^\frac{-n(n+1)}{4} \prod_{j=1}^n \Gamma(j/2) \prod_p (2m_p(G)) \\
&= 2 \pi^{-3} \cdot \sqrt{\pi} \cdot 1 \cdot \frac{\sqrt{\pi}}{2}  \prod_p (2m_p(G)) \\
&= \frac{1}{\pi^2} \prod_p (2m_p(G)) 
\end{align*} 
where the local masses $m_p(G)$ are defined in \cite[eq (3)]{CS} by 
\begin{equation*}
m_p(G) := \prod_q M_q(G) \cdot \prod_{q < q'} \(q'/q\)^\frac{n(q) \cdot n(q')}{2} \cdot 2^\text{$n$(I,I) - $n$(II)}.
\end{equation*}

From (\cite[Table 1]{CS}) we see that there is a unique ``species'' when the unimodular blocks are odd, so for $p\nmid 2S$ we have $2m_p = 2M_1 = (1-\frac{1}{p^2})^{-1}$.  For $p\neq 2$ with $p\mid 2S$ we have 
\begin{equation*}
2m_p 
= 2 \cdot M_1 \cdot M_p \cdot p^\frac{1\cdot 2}{2} 
= 2 \cdot \frac{1}{2} \cdot \frac{1}{2}\(1-\frac{\leg{-\alpha}{p}}{p}\)^{-1} \cdot p 
= \frac{p}{2\(1-\frac{\leg{-\alpha}{p}}{p}\)} 
\end{equation*}

Finally when $p=2$ we have four standard mass factors $M_{1/2}, M_{1}, M_{2}$, and $M_{4}$  of types II${}_0$, I${}_1$, I${}_2,$ and II${}_0$ respectively.  Since all of these forms have an odd form adjacent to it, they are ``bound'' (in the notation of \cite{CS}) and so they have ``species'' 1 and contribute a factor of $\frac{1}{2}$ each.  Therefore we have 
\begin{align*}
2m_2(G) 
&= 2\prod_q M_q(G) \cdot \prod_{q < q'} \(q'/q\)^\frac{n(q) \cdot n(q')}{2} \cdot 2^{n(I,I) - n(II)} \\
&= 2\(\prod_{q\in \{\frac{1}{2}, 1, 2, 4\}}\frac{1}{2}\) \cdot \(2/1\)^\frac{1 \cdot 2}{2} \cdot 2^{1 - 0}
\quad=\quad \frac{1}{2} .
\end{align*}

Putting these all together gives 
\begin{align*}
\mathrm{Mass}(G)
&= \frac{1}{\pi^2} \prod_p (2m_p(G)) \\
&= \frac{1}{\pi^2} (2m_2(G))
 \cdot \prod_{2\neq p\mid 2S} (2m_p(G))
  \cdot \prod_{p\nmid 2S} (2m_p(G)) \\
&= \frac{1}{\pi^2} (2m_2(G))
 \cdot \prod_{2\neq p\mid 2S} \frac{p}{2\(1-\frac{\ve_p}{p}\)}
 \cdot \prod_{p\nmid 2S}  \frac{1}{1-\frac{1}{p^2}}\\
&= \frac{1}{\pi^2} (2m_2(G))
 \cdot \prod_{2\neq p\mid 2S} \frac{p}{2\cancel{\(1-\frac{\ve_p}{p}\)}}  
  \frac{\cancel{\(1-\frac{\ve_p}{p}\)}\(1+\frac{\ve_p}{p}\)}{1-\frac{1}{p^2}}
 \cdot \prod_{p\nmid 2S}  \frac{1}{1-\frac{1}{p^2}}\\
&= \frac{1}{\pi^2} (2m_2(G))
 \cdot \prod_{2\neq p\mid 2S} \frac{p+ \ve_p}{2}
 \cdot \prod_{p\neq 2}  \frac{1}{1-\frac{1}{p^2}}\\
&= \frac{1}{\pi^2} (2m_2(G))
 \cdot \prod_{2\neq p\mid 2S} \frac{p+ \ve_p}{2}
 \cdot \frac{\pi^2}{6} \cdot \frac{3}{4}\\
&= \frac{1}{8} (2m_2(G))
 \cdot \prod_{2\neq p\mid 2S} \frac{p+ \ve_p}{2} \\
&= \frac{1}{16}
 \cdot \prod_{2\neq p\mid 2S} \frac{p+ \ve_p}{2}.
\end{align*} 
%
%
%
%
\end{proof}

\section{The main theorem}
In this section we prove the following theorem and corollary (by taking $W=1$), which were formulated and conjectured in \cite[eq (6.9) and eq (6.8)]{BJ}:

\begin{thm}  \label{Thrm:main_theorem}
Suppose that $S \in \N$ is an odd squarefree number and $W \in \N$ divides $S$.  Then
\begin{equation}\label{eq:main_thrm}
\sum_{Q\in \text{$S$-genus}} \epsilon_W(Q) \frac{r_{Q}(m)}{|\Aut(Q)|} 
= W \, \frac{r_{x^2 + y^2 + z^2}(m/W^2)}{48}
\end{equation}
for all $m \in \N$ with $W \mid m$ and
 $m \equiv 1,2 \pmod 4$, where  $\epsilon_W(Q)$ is defined by the formula
 $$
\epsilon_W(Q) := \prod_{p\mid W} \ve_p(\Gen(Q)).
 $$ 
 (Here if $W^2 \nmid m$ then both sides of (\ref{eq:main_thrm}) are zero.)
\end{thm}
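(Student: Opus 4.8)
The plan is to reduce the global identity \eqref{eq:main_thrm} to a product of purely local identities by applying the Siegel--Weil formula to each genus $G$ appearing in the $S$-genus, and then to verify those local identities using the density computations of Section~4 together with the mass formula of Lemma~\ref{lemma:mass_formula}. Concretely, Siegel's theorem expresses the weighted average $\sum_{Q\in G} r_Q(m)/|\Aut(Q)|$ as $\Mass(G) \cdot \prod_v \beta_{G,v}(m)$ (suitably normalized, following the conventions of \cite{Ha}), and likewise $r_{\XX}(m) = (\text{const}) \cdot \prod_v \beta_{\XX,v}(m)$ since $\XX$ has class number one. So the left-hand side of \eqref{eq:main_thrm} becomes $\sum_{G\subset S\text{-genus}} \epsilon_W(G)\, \Mass(G) \prod_v \beta_{G,v}(m)$, and after substituting Lemma~\ref{lemma:mass_formula} and Lemmas~\ref{lemma:S_genus_densities}, \ref{lemma:three_squares_two_density}, and~\ref{lemma:archimedean_density} the problem factors as a product over places, with the sum over $G$ becoming a sum over the sign tuples $(\ve_p)_{p\mid S}$ by Lemma~\ref{lemma:S_genus_binary}.

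The key steps, in order, are: (1) write down the exact Siegel--Weil normalization so that the archimedean and constant factors on both sides match --- here I would use Lemma~\ref{lemma:archimedean_density}, noting $\det(G) = 4S^2$ and $\det(\XX) = 1$, so the ratio of archimedean densities contributes exactly a factor involving $S$ that will combine with the $\prod_{p\mid S}$ factors; (2) at primes $p\nmid 2S$, observe $\beta_{G,p}(m) = \beta_{\XX,p}(m)$ by Lemma~\ref{lemma:S_genus_local}, so these places cancel identically; (3) at $p=2$, use Lemma~\ref{lemma:three_squares_two_density}: since $m\equiv 1,2\pmod 4$ we get $\beta_{G,2}(m)=1$ and $\beta_{\XX,2}(m)=\tfrac32$, a constant ratio independent of $G$; (4) at each odd $p\mid S$, isolate the $p$-local factor $(p+\ve_p)/2 \cdot \beta_{G,p}(m)$ from $\Mass(G)\beta_{G,p}(m)$ and compare with $\beta_{\XX,p}(m/W^2)$, splitting into the cases $p\mid W$ (where $\epsilon_W$ supplies a factor $\ve_p$) and $p\nmid W$; (5) carry out the sum $\sum_{\ve_p\in\{\pm1\}}$ at each such prime independently, since the product over $p\mid S$ and the sum over tuples interchange.

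The main obstacle will be step~(4)--(5): verifying the elementary but delicate identity at each odd $p\mid S$ relating $\sum_{\ve_p=\pm1}(\text{sign factor})\cdot\tfrac{p+\ve_p}{2}\cdot\beta_{G,p}(m)$ to a multiple of $\beta_{\XX,p}(m/W^2)$. This requires a careful case analysis on $\ord_p(m)$ (even versus odd, and whether $p\mid W$ so that the exponent drops by $2$ on the right), matching the piecewise formulas of Lemma~\ref{lemma:S_genus_densities} against Lemma~\ref{lemma:three_squares_odd_densities}. In the $p\nmid W$ case the two values $\ve_p=\pm1$ should add so that the $\ve_p$-dependent terms telescope into the $\leg{-m/p^{2k}}{p}$ term of $\beta_{\XX,p}$, while in the $p\mid W$ case the antisymmetric combination isolates precisely the shifted density $\beta_{\XX,p}(m/p^2)$ via the recursion \eqref{eq:Zero-type_recusion}; checking that the leading constants $(p\pm\ve_p)/2$ conspire correctly with the $1-\tfrac1{p^2}$ versus $1-\tfrac1p$ prefactors is where the ``intricate relationships among local densities'' mentioned in the introduction actually get used.

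Finally, once all local factors are assembled, I would collect the accumulated constants --- the $\tfrac12$ from $p=2$, the $\tfrac1{16}$ from the mass formula, the archimedean $\det$-ratio, and the per-prime factors --- and check that they multiply to exactly $\tfrac{W}{48}$, and that both sides vanish identically when $W^2\nmid m$ (which follows because then $\beta_{\XX,p}(m/W^2)$ is interpreted as zero, or equivalently $r_{\XX}(m/W^2)=0$, and the antisymmetric sum on the left kills the contribution by the parity mismatch in $\ord_p(m)$ at some $p\mid W$). Taking $W=1$ recovers the original Berkovich--Jagy identity as the stated corollary.
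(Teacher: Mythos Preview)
Your proposal is correct and follows essentially the same route as the paper: apply Siegel--Weil to each genus in the $S$-genus, cancel the places $p\nmid 2S$ against $\XX$, handle $p=2$ and $v=\infty$ by the explicit densities, substitute the mass formula so the sum over genera becomes a product over $p\mid S$ of sums over $\ve_p\in\{\pm1\}$, and then verify the resulting local identity at each odd $p\mid S$ by the case analysis you describe (which is exactly the content of Lemma~\ref{lemma:cases}). The only thing the paper makes slightly more explicit is the bookkeeping that reduces everything to showing a single expression $E$ equals $1$, but your outline already contains all of the ingredients and the correct structure.
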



\begin{cor}  Suppose that $S \in \N$ is an odd squarefree number.  Then
$$
\sum_{Q\in \text{$S$-genus}} \frac{r_{Q}(m)}{|\Aut(Q)|} 
= \frac{r_{x^2 + y^2 + z^2}(m)}{48}
$$
for all $m \in \N$ with 
 $m \equiv 1,2 \pmod 4$.
\end{cor}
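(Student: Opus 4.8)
The plan is to prove the identity directly from the Siegel--Weil (Siegel product) formula, exploiting the fact that in this untwisted case the weighted sum factors completely over the primes dividing $S$. The genus of $\XX$ has class number one with $|\Aut(\XX)| = 48$, so $\Mass(\Gen(\XX)) = \tfrac1{48}$ and the right-hand side is exactly the genus average $\sum_{Q\in\Gen(\XX)} r_Q(m)/|\Aut(Q)|$. On the left, the $S$-genus is a disjoint union of the finitely many genera $G$ it contains, so both sides are genus-theta averages and Siegel's formula rewrites each weighted sum as a mass times a product of local representation densities,
\begin{equation*}
\sum_{Q\in G}\frac{r_Q(m)}{|\Aut(Q)|} = \Mass(G)\prod_v \beta_{G,v}(m)
\qquad\text{and}\qquad
\frac{r_{\XX}(m)}{48} = \frac{1}{48}\prod_v \beta_{\XX,v}(m),
\end{equation*}
where the last equality uses class number one. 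Thus the Corollary is equivalent to $\sum_{G}\Mass(G)\prod_v\beta_{G,v}(m) = \tfrac1{48}\prod_v\beta_{\XX,v}(m)$, the sum running over genera $G$ in the $S$-genus.

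First I would compare the two density products place by place. By Lemma \ref{lemma:S_genus_local} every $G$ is $\Z_p$-equivalent to $\XX$ for all $p\nmid 2S$, so $\beta_{G,p}(m) = \beta_{\XX,p}(m)$ there. At $v=\infty$, Lemma \ref{lemma:archimedean_density} gives $\beta_{G,\infty}(m)/\beta_{\XX,\infty}(m) = (\det(\XX)/\det(G))^{1/2} = 1/(2S)$, since every form $B(x,y)+2Sz^2$ in the $S$-genus has determinant $4S^2$ (a binary block of determinant $2S$ scaled by the coefficient $2S$) while $\det(\XX)=1$. At $p=2$ the hypothesis $m\equiv 1,2\pmod 4$ is exactly what makes Lemma \ref{lemma:three_squares_two_density} return the $G$-independent values $\beta_{\XX,2}(m) = \tfrac32$ and $\beta_{G,2}(m) = 1$, giving ratio $2/3$. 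Dividing the displayed equivalence through by $\prod_v\beta_{\XX,v}(m)$ (which is nonzero, see below) therefore reduces the Corollary to
\begin{equation*}
\frac{1}{2S}\cdot\frac{2}{3}\sum_{G}\Mass(G)\prod_{p\mid S}\frac{\beta_{G,p}(m)}{\beta_{\XX,p}(m)} = \frac{1}{48}.
\end{equation*}

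Next I would factor the remaining sum over the primes $p\mid S$. By Lemma \ref{lemma:S_genus_binary} the genera $G$ are in bijection with the sign-tuples $(\ve_p)_{p\mid S}\in\{\pm1\}^r$; by Lemma \ref{lemma:mass_formula} the mass factors as $\Mass(G) = \tfrac1{16}\prod_{p\mid S}\tfrac{p+\ve_p}{2}$; and by Lemmas \ref{lemma:three_squares_odd_densities} and \ref{lemma:S_genus_densities} each ratio $\beta_{G,p}(m)/\beta_{\XX,p}(m)$ depends only on the single sign $\ve_p$. Hence the sum over $G$ splits as a product over $p\mid S$, and since $\tfrac1{2S}\cdot\tfrac23\cdot\tfrac1{16} = \tfrac{1}{48S}$, the identity of the previous display becomes $\prod_{p\mid S}\big[\sum_{\ve_p=\pm1}\tfrac{p+\ve_p}{2}\,\beta_{\ve_p,p}(m)/\beta_{\XX,p}(m)\big] = S$. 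It therefore suffices to prove, and I would prove, that each factor equals $p$; clearing denominators this is the local identity
\begin{equation*}
\frac{p+1}{2}\,\beta_{+,p}(m) + \frac{p-1}{2}\,\beta_{-,p}(m) = p\,\beta_{\XX,p}(m)
\end{equation*}
for every odd $p\mid S$, where $\beta_{\pm,p}$ abbreviates $\beta_{G,p}$ with $\ve_p = \pm 1$.

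Finally I would verify this local identity by substituting the closed forms of Lemmas \ref{lemma:three_squares_odd_densities} and \ref{lemma:S_genus_densities} and splitting on the parity of $\ord_p(m)$. The weights $\tfrac{p\pm1}{2}$ are chosen precisely to symmetrize the $\ve_p$-dependence: the geometric-series part survives only for $\ve_p=+1$ and its coefficient collapses via $(p+1)(1-\tfrac1p) = p(1-\tfrac1{p^2})$ to reproduce the leading part of $p\,\beta_{\XX,p}(m)$, while the boundary term recombines to $\tfrac1{p^k}\big(p+\leg{-m/p^{2k}}{p}\big)$ when $\ord_p(m)=2k$ and to $p\cdot\tfrac1{p^k}(1-\tfrac1{p^2})$ when $\ord_p(m)=2k+1$, each matching the corresponding term on the right. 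I expect the main obstacle to be careful bookkeeping at the two ``boundary'' places rather than any deep difficulty: one must confirm that the $p=2$ densities are genuinely $G$-independent, which hinges on $m\equiv1,2\pmod4$ (the $m\equiv3\pmod4$ values $0$ or $2$ in Lemma \ref{lemma:three_squares_two_density} would break the argument), and one must dispose of the degenerate case $\prod_v\beta_{\XX,v}(m)=0$. The latter does not arise here: for $m\equiv1,2\pmod4$ Lemma \ref{lemma:three_squares_two_density} gives $\beta_{\XX,2}(m)=\tfrac32>0$ while the odd-place densities of Lemma \ref{lemma:three_squares_odd_densities} are positive, so the product is nonzero and class number one yields $r_{\XX}(m)=\prod_v\beta_{\XX,v}(m)$ outright.
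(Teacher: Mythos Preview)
Your proposal is correct and follows essentially the same route as the paper: you apply Siegel--Weil to both sides, match local densities at $v\nmid 2S$ and at $v=\infty$ (via the determinant ratio $1/(2S)$), use Lemma~\ref{lemma:three_squares_two_density} at $p=2$, insert the mass formula of Lemma~\ref{lemma:mass_formula}, and then factor the remaining sum over $(\ve_p)_{p\mid S}$ into a product of local identities---your displayed identity $\tfrac{p+1}{2}\beta_{+,p}(m)+\tfrac{p-1}{2}\beta_{-,p}(m)=p\,\beta_{\XX,p}(m)$ is exactly the first identity of Lemma~\ref{lemma:cases}. The only difference is that the paper carries the extra twist $\epsilon_W$ throughout and obtains the Corollary as the specialization $W=1$, whereas you argue the untwisted case directly.
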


\begin{proof}
In what follows we occasionally write $\XX := x^2 + y^2 + z^2$ for brevity. 
We begin by expressing the LHS locally, and trying to relate it locally to the RHS.  Since the class number of $x^2 + y^2 + z^2$ is one, we can recover the RHS globally from its local expressions.

By the Siegel-Weil formula for definite ternary quadratic forms \cite[\S5]{Rallis} we have that 
{
\allowdisplaybreaks
\begin{align}
\sum_{Q\in \text{$S$-genus}} \epsilon_W(Q) \frac{r_{Q}(m)}{|\Aut(Q)|}
&= \sum_{\substack{\text{(genera)} \\ G\in \text{$S$-genus}}} \epsilon_W(G) \sum_{Q'\in G} \frac{r_{Q'}(m)}{|\Aut(Q')|} \\
&= \sum_{G \in \text{$S$-genus}} \(\prod_{p\mid W} \ve_p\) \Mass(G) \prod_v \beta_{G, v}(m) \\
&= \sum_{G \in \text{$S$-genus}} 
\(\Mass(G)  \prod_{p\mid W} \ve_p \prod_{v \mid 2S\infty} \beta_{G, v}(m) \prod_{p \nmid 2S} \beta_{G, p}(m) \).
\end{align}
}

Since at all primes $p\nmid 2S$ we have $G \sim_{\Z_p} x^2 + y^2 + z^2$, we can separate out a product which looks very much like the product of local densities for $x^2 + y^2 + z^2$, giving
{
\allowdisplaybreaks
\begin{align*}
&= \sum_{G \in \text{$S$-genus}} 
\Mass(G) \(\prod_{p\mid W} \ve_p\) \(\prod_{v \mid 2S\infty} \beta_{G, v}(m)\) \prod_{p \nmid 2S} \beta_{\XX, p}(m) \\
&= \(\prod_{p \nmid 2S} \beta_{\XX, p}(m)\)
\sum_{G \in \text{$S$-genus}} \Mass(G) \(\prod_{p\mid W} \ve_p\) \prod_{v \mid 2S\infty} \beta_{G, p}(m)\\
&= \( \det(Q)^{-1/2}\beta_{\XX, \infty}(m) \prod_{p \nmid 2S} \beta_{\XX, p}(m)\)
\sum_{G \in \text{$S$-genus}} \Mass(G) \(\prod_{p\mid W} \ve_p\) \prod_{p \mid 2S} \beta_{G, p}(m)\\
&= \( \frac{1}{2S}\beta_{\XX, \infty}(m) \prod_{p \nmid 2S} \beta_{\XX, p}(m)\)
\sum_{G \in \text{$S$-genus}} \Mass(G) \(\prod_{p\mid W} \ve_p\) \prod_{p \mid 2S} \beta_{G, p}(m)\\
&= \( \frac{W}{2S}\beta_{\XX, \infty}(m/W^2) \prod_{p \nmid 2S} \beta_{\XX, p}(m/W^2)\)
\sum_{G \in \text{$S$-genus}} \Mass(G) \(\prod_{p\mid W} \ve_p\) \prod_{p \mid 2S} \beta_{G, p}(m)\\
&=W  \(\prod_{v}\beta_{\XX, v}(m/W^2) \)
\( \frac{1}{2S} \prod_{p \mid 2S} \frac{1}{\beta_{\XX, p}(m/W^2)} \)
\sum_{G \in \text{$S$-genus}} \Mass(G) \(\prod_{p\mid W} \ve_p\) \prod_{p \mid 2S} \beta_{G, p}(m)\\
&= \(\frac{W}{48}\prod_{v}\beta_{\XX, v}(m/W^2) \)
\(48 \sum_{G\in \text{$S$-genus}} 
\frac{\Mass(G)}{2S} 
\(\prod_{p\mid W} \frac{\ve_p \,\beta_{G, p}(m)}{\beta_{\XX, p}(m/p^2)}\)
\prod_{\substack{p \mid 2S \\ p\nmid W}} \frac{\beta_{G, p}(m)}{\beta_{\XX, p}(m)} \).
\end{align*}
}
(Here we note that technically the division by $ \beta_{\XX, p}(m/p^2)$ in the next to last equality  may be undefined if $\beta_{\XX, p}(m/p^2) = 0$, however in this case Lemma \ref{lemma:cases} ensures that both sides of (\ref{eq:main_thrm}) are zero.)
%

Because  $\Mass(x^2 + y^2 + z^2) = \frac{1}{48}$ and $x^2 + y^2 + z^2$ has class number one, from Siegel's product formulas we see that the desired  equality 
\begin{equation}
\sum_{Q\in \text{$S$-genus}} \epsilon_W(G) \frac{r_{Q}(m)}{|\Aut(Q)|} 
= W \cdot \Mass(x^2 + y^2 + z^2) \prod_{v}\beta_{x^2+y^2+z^2, v}(m)
= W \frac{r_{x^2 + y^2 + z^2}(m)}{48} 
\end{equation}
is equivalent to showing that the expression
\begin{equation}
E := 48 \sum_{G\in \text{$S$-genus}} 
\frac{\Mass(G)}{2S}
\(\prod_{p\mid W} \frac{\ve_p \,\beta_{G, p}(m)}{\beta_{x^2+y^2+z^2, p}(m/p^2)}\)
\prod_{\substack{p \mid 2S \\ p\nmid W}} \frac{\beta_{G, p}(m)}{\beta_{x^2+y^2+z^2, p}(m)} 
\end{equation}
is equal to one.  By substituting our mass formula in Lemma \ref{lemma:mass_formula} and using the fact that $2S$ is square-free, we can rewrite $E$ more locally as
\begin{equation}
E = 3 \sum_{G\in \text{$S$-genus}} 
\frac{\beta_{G, 2}(m)}
{2\cdot \beta_{x^2+y^2+z^2, 2}(m)}
\[
\prod_{p \mid W} \frac{\ve_p\, \beta_{G, p}(m) \(p + \ve_p\)}
{2p\cdot \beta_{x^2+y^2+z^2, p}(m/p^2)}
\]
\[
\prod_{\substack{p \mid S \\ p\nmid W}} \frac{\beta_{G, p}(m) \(p + \ve_p\)}
{2p\cdot \beta_{x^2+y^2+z^2, p}(m)}
\]. 
\end{equation}
From Lemma \ref{lemma:three_squares_two_density}, we can evaluate the local densities at $p=2$, giving
\begin{equation} \label{eq:E}
E = \cancel3 \sum_{G\in \text{$S$-genus}} 
\frac{1}
{\cancel2 \cdot \frac{\cancel3}{\cancel2}}
\[
\prod_{p \mid W} \frac{\ve_p\, \beta_{G, p}(m) \(p + \ve_p\)}
{2p\cdot \beta_{x^2+y^2+z^2, p}(m/p^2)}
\]
\[
\prod_{\substack{p \mid S \\ p\nmid W}} \frac{\beta_{G, p}(m) \(p + \ve_p\)}
{2p\cdot \beta_{x^2+y^2+z^2, p}(m)}
\]. 
\end{equation}

Because the genera $G$ are indexed by the $r$-tuple of values $(\ve_p)_{p\mid S} \in \{\pm1\}^r$ and the since the factors in the product are all independent, we have that
{
\allowdisplaybreaks
\begin{align}
E 
&= 
\sum_{G\in \text{$S$-genus}}
\[
\prod_{p \mid W} \frac{\ve_p\, \beta_{G, p}(m) \(p + \ve_p\)}
{2p\cdot \beta_{x^2+y^2+z^2, p}(m/p^2)}
\]
\[
\prod_{\substack{p \mid S \\ p\nmid W}} \frac{\beta_{G, p}(m) \(p + \ve_p\)}
{2p\cdot \beta_{x^2+y^2+z^2, p}(m)}
\] \\ 
&=
\sum_{(\ve_p)_{p\mid S} \in \{\pm 1\}^{r}} 
\[
\prod_{p \mid W} \frac{\ve_p\, \beta_{G, p}(m) \(p + \ve_p\)}
{2p\cdot \beta_{x^2+y^2+z^2, p}(m/p^2)}
\]
\[
\prod_{\substack{p \mid S \\ p\nmid W}} \frac{\beta_{G, p}(m) \(p + \ve_p\)}
{2p\cdot \beta_{x^2+y^2+z^2, p}(m)}
\] \\ 
&=  
\[
\prod_{p \mid W} \sum_{\ve_p \in \{\pm 1\}}
\frac{\ve_p\, \beta_{G, p}(m) \(p + \ve_p\)}
{2p\cdot \beta_{x^2+y^2+z^2, p}(m/p^2)}
\]
\[
\prod_{\substack{p \mid S \\ p\nmid W}} \sum_{\ve_p \in \{\pm 1\}}
\frac{\beta_{G, p}(m) \(p + \ve_p\)}
{2p\cdot \beta_{x^2+y^2+z^2, p}(m)}
\].
\end{align}
}

By Lemma \ref{lemma:cases} we see that all summands for odd primes are identically one, giving (formally that) $E = \prod_{p \mid S} 1 = 1$, which completes the proof.

\end{proof}

We now evaluate the individual summands appearing in the last step of the proof of Theorem \ref{Thrm:main_theorem} for (odd) primes $p\mid S$ using our previously computed local density formulas. 

\begin{lem}  \label{lemma:cases}
Suppose $G$ is a genus in an $S$-genus, and $p\mid S$ is an (odd) prime.  Then
$$
\sum_{\ve_p \in \{\pm 1\}}
\frac{\beta_{G, p}(m) \(p + \ve_p\)}
{2p} =  \beta_{x^2+y^2+z^2, p}(m),
$$
and 
$$
\sum_{\ve_p \in \{\pm 1\}}
\frac{\ve_p \, \beta_{G, p}(m) \(p + \ve_p\)}{2p}
= \beta_{x^2+y^2+z^2, p}(m/p^2).
$$
\end{lem}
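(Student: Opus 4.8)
The plan is to verify both identities by a direct case analysis on the parity of $\ord_p(m)$, plugging in the explicit density formulas from Lemma~\ref{lemma:S_genus_densities} for the left-hand sides and from Lemma~\ref{lemma:three_squares_odd_densities} for the right-hand sides, and checking that the two $\ve_p$-terms of each sum combine correctly. The key observation that makes this work is that $\beta_{G,p}(m)$ depends on $\ve_p$ in an affine way: writing $\ord_p(m) = 2k$ or $2k+1$, Lemma~\ref{lemma:S_genus_densities} expresses $\beta_{G,p}(m)$ as $A + \ve_p B$ for quantities $A, B$ not involving $\ve_p$ (using that $\leg{-m/p^{2k}}{p} = \pm 1$ is itself independent of $\ve_p$). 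Then
$$
\sum_{\ve_p \in \{\pm1\}} \frac{(A + \ve_p B)(p + \ve_p)}{2p}
= \frac{(A+B)(p+1) + (A-B)(p-1)}{2p}
= A + \frac{B}{p},
$$
while
$$
\sum_{\ve_p \in \{\pm1\}} \frac{\ve_p (A + \ve_p B)(p + \ve_p)}{2p}
= \frac{(A+B)(p+1) - (A-B)(p-1)}{2p}
= \frac{A}{p} + B.
$$
So the whole problem reduces to identifying $A + B/p$ with $\beta_{\XX,p}(m)$ and $A/p + B$ with $\beta_{\XX,p}(m/p^2)$, in each of the two parity cases.

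First I would handle the case $\ord_p(m) = 2k$ with $k \geq 1$. Here Lemma~\ref{lemma:S_genus_densities} gives $A = (1-\tfrac1p)(1 + \tfrac1p + \cdots + \tfrac1{p^{k-1}}) + \tfrac1{p^k}$ and $B = (1-\tfrac1p)(1 + \cdots + \tfrac1{p^{k-1}}) + \tfrac1{p^k}\leg{-m/p^{2k}}{p}$. A short computation collapsing the telescoping-style sums should show $A + B/p$ equals the first branch of Lemma~\ref{lemma:three_squares_odd_densities} (note $(1-\tfrac1p)(1+\cdots+\tfrac1{p^{k-1}}) = 1 - \tfrac1{p^k}$, which one combines with the geometric factor $(1+\tfrac1p)$ coming from $1 + \tfrac1p$ multiplying the $B$-contribution), and that $A/p + B$ equals $\beta_{\XX,p}(m/p^2)$, whose order at $p$ is $2(k-1)$, matching the index shift $k \mapsto k-1$ in the formula. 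The boundary case $k=0$ (so $p \nmid m$) must be checked separately but is immediate: $A = 1$, $B = \leg{-m}{p}$, giving $A + B/p = \tfrac{p^2 + p\leg{-m}{p}}{p^2} = \beta_{\XX,p}(m)$, while $\beta_{\XX,p}(m/p^2) = \beta_{\XX,p}$ of a non-$p$-integral argument should be interpreted as $0$ by convention (the representation count at negative order vanishes), consistent with $A/p + B$ only if $\leg{-m}{p} = -1/p \cdot \ldots$; I would double check this degenerate normalization against \cite{Ha}, since it is the one genuinely delicate point.

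Then I would do the case $\ord_p(m) = 2k+1$. Now $A = (1-\tfrac1p)(1+\ve_p)(1+\cdots+\tfrac1{p^{k-1}})/(1+\ve_p)$... more carefully, Lemma~\ref{lemma:S_genus_densities} gives $\beta_{G,p}(m) = (1-\tfrac1p)(1+\ve_p)(1+\cdots+\tfrac1{p^{k-1}}) + \tfrac1{p^k}(1 - \tfrac{\ve_p}{p})$, so $A = (1-\tfrac1p)(1+\cdots+\tfrac1{p^{k-1}}) + \tfrac1{p^k}$ and $B = (1-\tfrac1p)(1+\cdots+\tfrac1{p^{k-1}}) - \tfrac1{p^{k+1}}$. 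One checks $A/p + B = (1-\tfrac1{p^2})(1+\cdots+\tfrac1{p^k})$, which should match $\beta_{\XX,p}(m/p^2)$ since $\ord_p(m/p^2) = 2(k-1)+1$ is odd with parameter $k-1$... wait, $2k+1 - 2 = 2(k-1)+1$, giving the odd branch with $k \mapsto k-1$, so one needs $(1-\tfrac1{p^2})(1+\cdots+\tfrac1{p^{k-1}})$; I will recheck the arithmetic here. Similarly $A + B/p$ should reproduce the odd branch of Lemma~\ref{lemma:three_squares_odd_densities} at parameter $k$. The main obstacle is purely bookkeeping: keeping the geometric-series truncation indices aligned under the $m \mapsto m/p^2$ shift and tracking the $k=0$ degenerate cases consistently with the normalization of $\beta_{Q,p}$ at fractional arguments. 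Once the affine-in-$\ve_p$ reduction above is in place, no conceptual difficulty remains, and the identity $E = \prod_{p\mid S} 1 = 1$ used in Theorem~\ref{Thrm:main_theorem} follows.
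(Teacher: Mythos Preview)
Your approach is essentially the same as the paper's---a direct case analysis on the parity of $\ord_p(m)$, substituting the explicit densities from Lemmas~\ref{lemma:three_squares_odd_densities} and~\ref{lemma:S_genus_densities}---but your affine decomposition $\beta_{G,p}(m)=A+\ve_p B$ and the resulting reductions
\[
\sum_{\ve_p}\frac{(A+\ve_p B)(p+\ve_p)}{2p}=A+\tfrac{B}{p},
\qquad
\sum_{\ve_p}\frac{\ve_p(A+\ve_p B)(p+\ve_p)}{2p}=\tfrac{A}{p}+B
\]
are a genuine streamlining over the paper's direct expansion of each sum. The verifications $A+B/p=\beta_{\XX,p}(m)$ and $A/p+B=\beta_{\XX,p}(m/p^2)$ then go through by telescoping, exactly as you sketch.

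Two points on your uncertainties. First, in the odd case $\ord_p(m)=2k+1$ your formula $A/p+B=(1-\tfrac1{p^2})(1+\cdots+\tfrac1{p^{k}})$ is off by one: with $A=(1-\tfrac1p)S_k+\tfrac1{p^k}$ and $B=(1-\tfrac1p)S_k-\tfrac1{p^{k+1}}$ one gets $A/p+B=(1-\tfrac1{p^2})S_k=(1-\tfrac1{p^2})(1+\cdots+\tfrac1{p^{k-1}})$, which is exactly $\beta_{\XX,p}(m/p^2)$ under $k\mapsto k-1$, so the identity holds. Second, on the boundary cases for the twisted identity: when $\ord_p(m)=1$ (odd, $k=0$) you get $A=1$, $B=-\tfrac1p$, hence $A/p+B=0$, matching $\beta_{\XX,p}(m/p^2)=0$; this is precisely the case the main theorem uses to conclude both sides vanish when $p^2\nmid m$. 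When $\ord_p(m)=0$ the second identity genuinely fails under the convention $\beta_{\XX,p}(\text{non-integer})=0$ (you get $\tfrac1p+\leg{-m}{p}\ne 0$), but this case never arises in Theorem~\ref{Thrm:main_theorem} since the twisted factor is only applied at primes $p\mid W\mid m$. The paper's own proof likewise restricts Cases~3 and~4 to $k\ge 1$ and handles the degenerate situation via the parenthetical remark in the proof of Theorem~\ref{Thrm:main_theorem}.
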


\begin{proof}
In the following proof we verify several algebraic identities of polynomials in $p$, which justifies our neglecting the issue of division by zero.

{\bf Case 1: $\ord_p(m) = 2k$ is even:}
{
\allowdisplaybreaks
\begin{align*}
\sum_{\ve_p \in \{\pm 1\}}
&\frac{\beta_{G, p}(m) \(p + \ve_p\)}
{2p\cdot \beta_{x^2+y^2+z^2, p}(m)} \\
&=
\frac{\sum_{\ve_p \in \{\pm 1\}}\[ \(1-\frac{1}{p}\)(1+ \ve_p)(1 + \frac{1}{p} + \cdots + \frac{1}{p^{k-1}}) + \frac{1}{p^k}\(1 + \ve_p\leg{-m}{p}\) \]  \(1 + \frac{\ve_p}{p}\)}
{2\cdot \[ \(1-\frac{1}{p^2}\)(1 + \frac{1}{p} + \cdots + \frac{1}{p^{k-1}}) + \frac{1}{p^k}\(1 + \frac{\leg{-m}{p}}{p}\) \]} 
\end{align*}
}
When $\ve_p=-1$ the first term of the numerator vanishes, giving
{
\allowdisplaybreaks
\begin{align*}
&=
\frac{
 2\(1-\frac{1}{p^2}\)(1 + \frac{1}{p} + \cdots + \frac{1}{p^{k-1}}) 
 + \frac{1}{p^k} \sum_{\ve_p \in \{\pm 1\}}
  \(1 + \frac{\ve_p}{p}\)
  \(1 + \ve_p\leg{-m}{p}\)
}
{2\cdot \[ \(1-\frac{1}{p^2}\)(1 + \frac{1}{p} + \cdots + \frac{1}{p^{k-1}}) 
+ \frac{1}{p^k}\(1 + \frac{\leg{-m}{p}}{p}\) \]} \\
&=
\frac{
 2\(1-\frac{1}{p^2}\)(1 + \frac{1}{p} + \cdots + \frac{1}{p^{k-1}}) 
 + \frac{1}{p^k} \sum_{\ve_p \in \{\pm 1\}}
  \(1 + \frac{\ve_p}{p} + \ve_p\leg{-m}{p} + \frac{\leg{-m}{p}}{p}\)
}
{2\cdot \[ \(1-\frac{1}{p^2}\)(1 + \frac{1}{p} + \cdots + \frac{1}{p^{k-1}}) 
+ \frac{1}{p^k}\(1 + \frac{\leg{-m}{p}}{p}\) \]} \\
&=
\frac{
 2\(1-\frac{1}{p^2}\)(1 + \frac{1}{p} + \cdots + \frac{1}{p^{k-1}}) 
 + \frac{1}{p^k} \(2 + 0 + 0 + 2\frac{\leg{-m}{p}}{p}\)
}
{2\cdot \[ \(1-\frac{1}{p^2}\)(1 + \frac{1}{p} + \cdots + \frac{1}{p^{k-1}}) 
+ \frac{1}{p^k}\(1 + \frac{\leg{-m}{p}}{p}\) \]}  
\quad = \quad 1.
\end{align*}
}

{\bf Case 2: $\ord_p(m) = 2k+1$ is odd:}
{
\allowdisplaybreaks
\begin{align*}
\sum_{\ve_p \in \{\pm 1\}}
&\frac{\beta_{G, p}(m) \(p + \ve_p\)}
{2p\cdot \beta_{x^2+y^2+z^2, p}(m)} \\
&=
\frac{\sum_{\ve_p \in \{\pm 1\}}\[ \(1-\frac{1}{p}\)(1+ \ve_p)(1 + \frac{1}{p} + \cdots + \frac{1}{p^{k-1}})
 + \frac{1}{p^k}\(1 - \frac{\ve_p}{p}\) \]  \(1 + \frac{\ve_p}{p}\)}
{2\cdot \[ \(1-\frac{1}{p^2}\)(1 + p + \cdots + p^{k})  \]} 
\end{align*}
}
When $\ve_p=-1$ the first term of the numerator vanishes, giving
{
\allowdisplaybreaks
\begin{align*}
&=
\frac{
 2\(1-\frac{1}{p^2}\)(1 + \frac{1}{p} + \cdots + \frac{1}{p^{k-1}}) 
 + \frac{1}{p^k} \sum_{\ve_p \in \{\pm 1\}}
  \(1 - \frac{\ve_p}{p}\)
  \(1 + \frac{\ve_p}{p}\)
}
{2\cdot \[ \(1-\frac{1}{p^2}\)(1 + \frac{1}{p} + \cdots + \frac{1}{p^{k}})  \]} \\
&=
\frac{
 2\(1-\frac{1}{p^2}\)(1 + \frac{1}{p} + \cdots + \frac{1}{p^{k-1}}) 
 + \frac{1}{p^k} \sum_{\ve_p \in \{\pm 1\}}
  \(1 - \frac{1}{p^2} \)
}
{2\cdot \[ \(1-\frac{1}{p^2}\)(1 + \frac{1}{p} + \cdots + \frac{1}{p^{k}}) \]} \\
&=
\frac{
 2\(1-\frac{1}{p^2}\)(1 + \frac{1}{p} + \cdots + \frac{1}{p^{k-1}}) 
 + \frac{2}{p^k}   \(1-\frac{1}{p^2}\)
}
{2\cdot \[ \(1-\frac{1}{p^2}\)(1 + \frac{1}{p} + \cdots + \frac{1}{p^{k}})   \]}  
\quad = \quad 1.
\end{align*}
}

{\bf Case 3: $\ord_p(m) = 2k$ is even with $k\geq 1$:}
{
\allowdisplaybreaks
\begin{align*}
\sum_{\ve_p \in \{\pm 1\}}
& \frac{\ve_p\, \beta_{G, p}(m) \(p + \ve_p\)}
{2p\cdot \beta_{x^2+y^2+z^2, p}(m/p^2)} \\
&=
\frac{\sum_{\ve_p \in \{\pm 1\}} \ve_p \[ \(1-\frac{1}{p}\)(1+ \ve_p)(1 + \frac{1}{p} + \cdots + \frac{1}{p^{k-1}}) + \frac{1}{p^k}\(1 + \ve_p\leg{-m}{p}\) \]  \(1 + \frac{\ve_p}{p}\)}
{2\cdot \[ \(1-\frac{1}{p^2}\)(1 + \frac{1}{p} + \cdots + \frac{1}{p^{k-2}}) + \frac{1}{p^{k-1}}\(1 + \frac{\leg{-m}{p}}{p}\) \]} 
\end{align*}
}
When $\ve_p=-1$ the first term of the numerator vanishes, giving
{
\allowdisplaybreaks
\begin{align*}
&=
\frac{
 2\(1-\frac{1}{p^2}\)(1 + \frac{1}{p} + \cdots + \frac{1}{p^{k-1}}) 
 + \frac{1}{p^k} \sum_{\ve_p \in \{\pm 1\}}
  \ve_p
  \(1 + \frac{\ve_p}{p}\)
  \(1 + \ve_p\leg{-m}{p}\)
}
{2\cdot \[ \(1-\frac{1}{p^2}\)(1 + \frac{1}{p} + \cdots + \frac{1}{p^{k-2}}) + \frac{1}{p^{k-1}}\(1 + \frac{\leg{-m}{p}}{p}\) \]} \\
&=
\frac{
 2\(1-\frac{1}{p^2}\)(1 + \frac{1}{p} + \cdots + \frac{1}{p^{k-1}}) 
 + \frac{1}{p^k} \sum_{\ve_p \in \{\pm 1\}}
  \(\ve_p + \frac{1}{p} + \leg{-m}{p} + \ve_p\frac{\leg{-m}{p}}{p}\)
}
{2\cdot \[ \(1-\frac{1}{p^2}\)(1 + \frac{1}{p} + \cdots + \frac{1}{p^{k-2}}) 
+ \frac{1}{p^{k-1}} \(1 + \frac{\leg{-m}{p}}{p}\) \]} \\
&=
\frac{
 \cancel2 \(1-\frac{1}{p^2}\)(1 + \frac{1}{p} + \cdots + \frac{1}{p^{k-1}}) 
 + \frac{1}{p^k} \(0 + \frac{\cancel 2}{p} + \cancel 2 \leg{-m}{p} + 0\)
}
{\cancel2\cdot \[ \(1-\frac{1}{p^2}\)(1 + \frac{1}{p} + \cdots + \frac{1}{p^{k-2}}) + \frac{1}{p^{k-1}} \(1 + \frac{\leg{-m}{p}}{p}\) \]} \\
&=
\frac{
 \(1-\frac{1}{p^2}\)(1 + \frac{1}{p} + \cdots + \frac{1}{p^{k-2}}) + \frac{1}{p^{k-1}}\(1-\cancel{\frac{1}{p^2}}\)
 + \frac{1}{p^k} \(\cancel{\frac{1}{p}} + \leg{-m}{p} \)
}
{ \[ \(1-\frac{1}{p^2}\)(1 + \frac{1}{p} + \cdots + \frac{1}{p^{k-2}}) + \frac{1}{p^{k-1}} \(1 + \frac{\leg{-m}{p}}{p}\) \]}  
\quad 
= \quad 1.
\end{align*}
}

{\bf Case 4: $\ord_p(m) = 2k+1$ is odd with $k\geq 1$:}
{
\allowdisplaybreaks
\begin{align*}
\sum_{\ve_p \in \{\pm 1\}}
&\frac{\ve_p \, \beta_{G, p}(m) \(p + \ve_p\)}
{2p\cdot \beta_{x^2+y^2+z^2, p}(m/p^2)} \\
&=
\frac{\sum_{\ve_p \in \{\pm 1\}}\ve_p  \[ \(1-\frac{1}{p}\)(1+ \ve_p)(1 + \frac{1}{p} + \cdots + \frac{1}{p^{k-1}})
  + \frac{1}{p^k}\(1 - \frac{\ve_p}{p}\) \]  \(1 + \frac{\ve_p}{p}\)}
{2\cdot \[ \(1-\frac{1}{p^2}\)(1 + \frac{1}{p} + \cdots + \frac{1}{p^{k-1}})  \]} 
\end{align*}
}
When $\ve_p=-1$ the first term of the numerator vanishes, giving
{
\allowdisplaybreaks
\begin{align*}
&=
\frac{
 2\(1-\frac{1}{p^2}\)(1 + \frac{1}{p} + \cdots + \frac{1}{p^{k-1}})
  + \frac{1}{p^k} \sum_{\ve_p \in \{\pm 1\}} \ve_p 
  \(1 - \frac{\ve_p}{p}\)
  \(1 + \frac{\ve_p}{p}\)
}
{2\cdot \[ \(1-\frac{1}{p^2}\)(1 + \frac{1}{p} + \cdots + \frac{1}{p^{k-1}})  \]} \\
&=
\frac{
 2\(1-\frac{1}{p^2}\)(1 + \frac{1}{p} + \cdots + \frac{1}{p^{k-1}})
  + \frac{1}{p^k} \cancel{\sum_{\ve_p \in \{\pm 1\}} \ve_p  \(1 - \frac{1}{p^2} \)}
}
{2\cdot \[ \(1-\frac{1}{p^2}\)(1 + \frac{1}{p} + \cdots + \frac{1}{p^{k-1}}) \]} 
\quad = \quad 1.
\end{align*}
}
%
%
\end{proof}

\section{Closing Remarks}

Since the proof of Theorem \ref{Thrm:main_theorem} almost exclusively involves local computations, it is reasonable to ask how it can be generalized.  We propose the following easily stated


\newcommand{\p}{\mathfrak{p}}

\begin{conj}
Suppose that $F$ is a totally real number field of class number one in which 2 is inert, $A_F$ is the ring of integers of $F$,  $2S(A_F^\times)^2$ is the squarefree determinant squareclass of a totally positive definite $A_F$-valued binary quadratic form, and $W \in A_F$ divides $S$.  Then
$$
\sum_{Q\in \text{$S$-genus}} \epsilon_W(Q) \frac{r_{Q}(m)}{|\Aut(Q)|} 
= \kappa_F\,W^{[F:\mathbb{Q}]}  \, \frac{r_{x^2 + y^2 + z^2}(m/W^2)}{|\Aut(x^2 + y^2 + z^2)|}
$$
for all totally positive $m \in A_F$ with $W \mid m$ and
 $m \in (A_F^\times)^2 \cup 2(A_F^\times)^2 \pmod {4A_F}$, where  $\epsilon_W(Q)$ is defined using quadratic norm residue symbols by the formula
 $$
\epsilon_W(Q) := \prod_{\p\mid W} \ve_\p(\Gen(Q)),
 $$ 
and $\kappa_F \in \Q>0$ is a constant depending only on $F$.
\end{conj}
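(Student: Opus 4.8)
The plan is to follow the architecture of the proof of Theorem \ref{Thrm:main_theorem} line by line, replacing $\Z$ by $A_F$, each completion $\Q_p$ by the completion $F_\p$, and the ``local prime'' $p$ by the residue cardinality $q_\p := |A_F/\p| = N_{F/\Q}(\p)$. Concretely, one would re-prove the four ingredients in order: (i) a local structure result, giving that every genus $G$ in an $S$-genus over $A_F$ is diagonalizable at every $\p$ and has the shape $\al x^2 + \al \varpi y^2 + \varpi z^2$ at $\p \mid S$ (here $\varpi$ is a uniformizer and $\al \in A_{F,\p}^\times/(A_{F,\p}^\times)^2$ is a well-defined squareclass), exactly as in Lemma \ref{lemma:S_genus_local}; (ii) the binary classification, showing these genera are in bijection with totally positive binary genera of determinant squareclass $2S(A_F^\times)^2$ and are labelled by $(\ve_\p)_{\p\mid S}\in\{\pm1\}^r$, as in Lemma \ref{lemma:S_genus_binary}; (iii) the local density computations of Section 4; and (iv) a Conway--Sloane / Minkowski--Siegel mass formula over $F$, as in Lemma \ref{lemma:mass_formula}. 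With these in hand, the Siegel--Weil formula for totally definite ternary forms over the totally real field $F$ rewrites the left-hand side as $\sum_{G} \epsilon_W(G)\,\Mass(G)\prod_v \beta_{G,v}(m)$, and since $G\sim_{A_{F,\p}} x^2+y^2+z^2$ at all $\p\nmid 2S$, one factors out the product of densities over $v\nmid 2S$, reducing the identity to a finite local identity over the places $\p \mid 2S$ together with the archimedean places.

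For the odd primes $\p\mid S$ this reduction is painless: $F_\p$ is a local field of odd residue characteristic with residue field $\mathbb{F}_{q_\p}$, so the Jordan splitting theorem, the density formulas of Lemmas \ref{lemma:S_genus_densities} and \ref{lemma:three_squares_odd_densities}, and in particular the summand identities of Lemma \ref{lemma:cases} all hold verbatim with $p$ replaced by $q_\p$, because Lemma \ref{lemma:cases} is a family of polynomial identities in $p$. Hence, after the same rearrangement and factorization-over-$(\ve_\p)$ step used at the end of the proof of Theorem \ref{Thrm:main_theorem}, every factor indexed by an odd prime $\p\mid S$ collapses to $1$, and the genera remain indexed by $\{\pm1\}^r$.

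What is left is the contribution of the unique prime $\mathfrak{P}$ above $2$ (inert, so $F_{\mathfrak{P}}/\Q_2$ is unramified of degree $d:=[F:\Q]$ with residue field $\mathbb{F}_{2^d}$) and of the $d$ archimedean places. Here one must: (a) extend Cassels' classification of binary $\Z_2$-lattices to $A_{F,\mathfrak{P}}$-lattices and check that a binary form of determinant in $2S(A_{F,\mathfrak{P}}^\times)^2$ is still diagonal and lies in exactly two local genera at $\mathfrak{P}$, with the Hasse-invariant product formula again forcing the genus at $\mathfrak{P}$ once the $r$ odd choices $\ve_{p_i}$ are fixed --- this is what keeps the count of binary genera equal to $2^r$ despite $A_{F,\mathfrak{P}}^\times/(A_{F,\mathfrak{P}}^\times)^2$ being larger than $\Z_2^\times/(\Z_2^\times)^2$; (b) compute $\beta_{x^2+y^2+z^2,\mathfrak{P}}(m)$ and $\beta_{G,\mathfrak{P}}(m)$ for $m \in (A_F^\times)^2\cup 2(A_F^\times)^2 \pmod{4A_F}$ (by counting residues modulo $16A_F$, as in Lemma \ref{lemma:three_squares_two_density}) and verify that in this range the ratio $\beta_{G,\mathfrak{P}}(m)/\beta_{x^2+y^2+z^2,\mathfrak{P}}(m)$ is a \emph{nonzero constant independent of both $G$ and $m$}; and (c) collect the archimedean densities from Lemma \ref{lemma:archimedean_density} --- which, together with the shift $m\mapsto m/W^2$, produce the power of $W$ --- along with the $\zeta_F(2)$ and $\mathrm{disc}(F)$ factors appearing in the $F$-analogue of the mass formula. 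All of these quantities are independent of $m$ and of $G$, so they bundle into a single positive rational constant depending only on $F$, which one identifies with $\kappa_F$ (and checks matches the normalization $W^{[F:\Q]}/|\Aut(x^2+y^2+z^2)|$ on the right).

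The main obstacle is step (b): proving that the dyadic density $\beta_{G,\mathfrak{P}}(m)$ is truly insensitive to the choice of genus $G$ within the $S$-genus once $m$ is restricted to $(A_F^\times)^2\cup 2(A_F^\times)^2 \pmod{4A_F}$. Over $\Q$ this is the content of the ``$1$ versus $0$ or $2$'' dichotomy in Lemma \ref{lemma:three_squares_two_density}, where the problematic $m\equiv 3\pmod 4$ case is simply excluded; over a general unramified dyadic extension the analysis of $A_{F,\mathfrak{P}}$-valued ternary forms $u x^2 + 2u y^2 + 2z^2$ modulo powers of $2$ is substantially more intricate, and one must show that the excluded residue classes of $m$ are exactly the ones on which the $G$-dependence appears. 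A secondary difficulty is pinning down the precise normalized form of the Minkowski--Siegel mass formula over $F$ (which is known, e.g. via the Tamagawa number computation) well enough to extract $\kappa_F$ explicitly; but this is bookkeeping rather than a conceptual gap. Once (a)--(c) are settled, the odd-prime identities of Lemma \ref{lemma:cases} close the argument exactly as over $\Q$.
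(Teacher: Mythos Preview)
Your proposal follows essentially the same architecture as the paper's own sketch: reduce via Siegel--Weil to a product of local factors, observe that Lemmas \ref{lemma:three_squares_odd_densities}, \ref{lemma:S_genus_densities}, \ref{lemma:mass_formula}, and \ref{lemma:cases} go through verbatim at all $\p\nmid 2$ upon replacing $p$ by $q_\p$, and bundle the archimedean and mass-formula contributions into the constant $\kappa_F$. The paper's sketch is considerably terser than yours, but the five steps it lists correspond exactly to your ingredients (i)--(iv) and the final assembly.

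The one place you go further than the paper is in singling out the dyadic place $\mathfrak{P}\mid 2$ as the main obstacle and articulating precisely what needs to be checked there (that $\beta_{G,\mathfrak{P}}(m)$ is independent of $G$ on the restricted residue classes of $m$). The paper's sketch subsumes this under item (2), treating all unramified places uniformly, and only in the subsequent Remark acknowledges that controlling the $\p\mid 2$ factors is ``a lot of additional work'' in the general (non-inert) setting. Your more cautious stance is reasonable: even in the unramified case the analogue of Lemma \ref{lemma:three_squares_two_density} requires an explicit residue count over $A_F/16A_F$, and the paper does not actually carry this out. So your identification of step (b) as the genuine remaining gap is accurate, and indeed is part of why the statement is labelled a Conjecture rather than a Theorem.
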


\begin{proof}[Sketch of proof]
We proceed exactly as in the proof of Theorem \ref{Thrm:main_theorem}, with the following modifications:
\begin{enumerate}

\item The description of the binary genera of discriminant $-8S$  since as before we fixed the archimedean type of all forms in the $S$-genus, and there is still only one remaining place at $p=2$, whose local type is determined by the product formula.

\item Non-archimedean local density and local mass factors formulas at unramified places $\p\mid p$ of $F$ are the same if we replace factors of $p$ by $q:=|A_F/\p A_F|$.

\item Calculations with the mass formula will only differ in the Gamma function factors appearing, which contribute to the constant $\kappa_F$.

\item Lemma \ref{lemma:cases} remains unchanged if we replace odd primes $p$ by places $\p \nmid 2$.

\item Siegel's product formula and an explicit mass formula still holds over totally real number fields.
\end{enumerate}
To determine the exact constant $\kappa_F$ we need use an explicit 
mass formula valid for totally definite  ternary quadratic forms of level $4S  A_F$ over totally real number fields $F$. 
\end{proof}

\begin{rem}
While almost surely a version of the $S$-genus identities hold over an arbitrary totally real number field, the formulation becomes a little more involved.  In addition to there being a lot of additional work to control the contributions from the local factors at $\p\mid 2$, the presence of non-trivial class number makes the language of quadratic lattices (which may not be free as $A_F$-modules) the more natural framework in which to state the relevant identities.  
\end{rem}

\bibliographystyle{alpha}

\bibliography{refs}

\end{document}